\documentclass[a4paper,11pt]{article}

\usepackage{stmaryrd}
%\usepackage{showlabels}
%%%%%%%%%%%%%%%%%% PACKAGES %%%%%%%%%%%%%

\usepackage[T1]{fontenc}
\usepackage[utf8]{inputenc}
\usepackage{lipsum}
\usepackage{amsmath, amssymb, mathtools, amsthm}
\usepackage{comment}

\usepackage[usenames,dvipsnames]{xcolor}
\usepackage{graphicx}
\usepackage{minitoc}
\usepackage{tikz}
\usepackage{enumerate}
\usepackage{filecontents}
\usepackage[margin=0.96 in]{geometry}
\usepackage{bbm}

\usepackage[numbers,sort&compress]{natbib}
\usepackage[colorlinks=true]{hyperref}
\hypersetup{urlcolor=black, citecolor=red}
\usepackage{mathtools}
\usepackage{float}
\usepackage{xspace}
\usepackage{wrapfig}
\usepackage{comment}
\usepackage{hyperref}
\usepackage{datetime}

 \usepackage{epsfig}
 \usepackage{subcaption}
 \usepackage{multirow}
 \usepackage{lineno}
 \usepackage{fullpage}
 \usepackage[normalem]{ulem} 
 \usepackage{makeidx}
 
%\usepackage[inner]{showlabels} % show label on pdf

%%%%%%%%%%%%%%%%%% PACKAGES %%%%%%%%%%%%%%%%%%%%%

\usepackage[title]{appendix}
\usepackage{dsfont}

%%% Curly letters %%%

\newcommand{\curlyH}{\mathcal{H}}
\newcommand{\curlyI}{\mathcal{I}}
\newcommand{\curlyJ}{\mathcal{J}}
\newcommand{\curlyK}{\mathcal{K}}

\newcommand{\curlyP}{\mathcal{P}}

\newcommand{\curlyT}{\mathcal{T}}

%%%%%%%%%%%%%%%%%% MACROS ---------------------->

%%% Bracket-constructions %%%
\DeclarePairedDelimiter{\abs}{\lvert}{\rvert}
\DeclarePairedDelimiter{\norm}{\lVert}{\rVert}

\DeclarePairedDelimiter{\prt}{(}{)}

\newcommand \commentout[1] {}
%%% Math operators

%%% Differential operators %%%

\DeclareMathAlphabet{\mathup}{OT1}{\familydefault}{m}{n}
\newcommand{\dx}[1]{\mathop{}\!\mathup{d} #1}

\newcommand{\CORR}[1]{{\color{black}#1}}
\newcommand{\CORRdeux}[1]{{\color{black}#1}}

%\newcommand{\discuss}[1]{{\color{red}#1}}
%\newcommand{\tc}[1]{{\color{cyan}#1}}
%\newcommand{\tc}{\textcolor{cyan}}
%\newcommand{\tr}{\textcolor{orange}}    

%%%%%%%%%%%%%%%%%% THEOREMS ---------------------->
\theoremstyle{plain}
\newtheorem{thm}{Theorem}[section]
\newtheorem{lemma}[thm]{Lemma}

\theoremstyle{remark}
\newtheorem{remark}[thm]{\bf Remark}
\newtheorem{definition}[thm]{\bf Definition}

%----------------> THEOREMS %%%%%%%%%
\newcommand{\ie}{\emph{i.e.}\;}
\newcommand{\cf}{\emph{cf.}\;}

%%% new commands for variables

\newcommand{\sign}{\mathrm{sign}}

\newcommand{\ddt}{\frac{\dx{}}{\dx{t}}}

\newcommand{\R}{\mathbb{R}}

\newcommand*{\dd}{\mathop{\kern0pt\mathrm{d}}\!{}}
\newcommand {\es}  {\varepsilon}
\pdfstringdefDisableCommands{\def\eqref#1{(\ref{#1})}}

%%%%%%%%%%%%%%%%%%%%%%%%%%%%%%%%%%%%%%
%%%%%%%%%%%%%%%%%%%%%%%%%%%%%%%%%%%%%%

\makeatletter
\let\@fnsymbol\@arabic
\makeatother
\title{ \CORR{Effective interface conditions for a porous medium type problem}}
\author{Giorgia Ciavolella\thanks{Sorbonne Universit{\'e}, Inria, Universit\'{e} de Paris, Laboratoire Jacques-Louis Lions, UMR7598, 4 place Jussieu, 75005 Paris, France. Emails: giorgia.ciavolella@inria.fr, ndavid@math.univ-lyon1.fr, alexandre.poulain@univ-lille.fr} \thanks{Dipartimento di Matematica, Università di Roma "Tor Vergata", Italy} \thanks{Current affiliation: Team MONC, INRIA Bordeaux-Sud-Ouest, Institut de Mathématiques de Bordeaux, CNRS UMR 5251 \& Université de Bordeaux, 351 cours de la Libération, 33405 Talence Cedex, France}
\and
Noemi David\footnotemark[1] \thanks{Dipartimento di Matematica, Universit\'a di Bologna, Italy} \thanks{Current affiliation: Institute Camille Jordan, Université de Lyon 1, 69100 Villeurbanne, France}
\and
Alexandre Poulain\footnotemark[1] \thanks{Corresponding author} \thanks{Current affiliation: Laboratoire Paul Painlevé, UMR 8524 CNRS, Université de Lille, F-59000 Lille, France. } } 

\date{\today}

\begin{document}
\maketitle
%\CORR{I have modified the bibliography: Brezis without accent, there where both doi and URL and bathory et al. URL was wrong..to verify again before submitting (also preprint)}
%\tr{
%\begin{itemize}
%    \item Do we have to add regularity on the domain and boundaries? 
%\end{itemize}
%Something I do not remember:
%\begin{itemize}
%    \item p15-> Why deriv $x_i$ of $L_\es$ are bounded for the mean value theorem? 
%\end{itemize}
%}
\begin{abstract} 
Motivated by biological applications on tumour invasion through thin membranes, we study a porous-medium type equation where the density of the  cell population evolves under Darcy's law, assuming continuity of both the density and flux velocity on the thin membrane which separates two domains. The drastically different scales and mobility rates between the membrane and the adjacent tissues lead to consider the limit as the thickness of the membrane approaches zero. We are interested in recovering the \textit{effective interface problem}
and the transmission conditions on the limiting zero-thickness surface, formally derived by Chaplain \textit{et al.} (2019), which are compatible with nonlinear generalized Kedem-Katchalsky ones. Our analysis relies on \textit{a priori} estimates and compactness arguments as well as on the construction of a suitable extension operator which allows to deal with the degeneracy of the mobility rate in the membrane, as its thickness tends to zero.

\end{abstract}

\begin{flushleft}
    \noindent{\makebox[1in]\hrulefill}
\end{flushleft}
	2010 \textit{Mathematics Subject Classification.} 35B45; 35K57; 35K65; 35Q92; 76N10;  76S05; 
	\newline\textit{Keywords and phrases.} Membrane boundary conditions; Effective interface; Porous medium equation; Nonlinear reaction-diffusion equations; Tumour growth models\\[-2.em]
\begin{flushright}
    \noindent{\makebox[1in]\hrulefill}
\end{flushright}

\section{Introduction}
 
We consider a model of cell movement through a membrane where the \CORR{population density} ${u=u(t,x)}$ is driven by porous medium dynamics. We assume the domain to be an open and bounded set $\Omega\subset\R^3$. This domain $\Omega$ is divided into three open subdomains, $\Omega_{i,\es}$ for $i=1,2,3$, where $\es>0$ is the thickness of the intermediate membrane, $\Omega_{2,\es}$, see Figure~\ref{fig:domains}. In the three domains, the cells are moving with different constant mobilities, $\mu_{i,\es}$, for $i=1,2,3$, and they are allowed to cross the adjacent boundaries of these domains which are $\Gamma_{1,2,\es}$ (between $\Omega_{1,\es}$ and $\Omega_{2,\es}$) and $\Gamma_{2,3,\es}$ (between $\Omega_{2,\es}$ and $\Omega_{3,\es}$). 
\CORR{Then, we write $\Omega= \Omega_{1,\es}\cup \Omega_{2,\es} \cup \Omega_{3,\es}$, with $\Gamma_{1,2,\es}=\partial{\Omega}_{1,\es}\cap \partial{\Omega}_{2,\es}$, and $\Gamma_{2,3,\es}=\partial{\Omega}_{2,\es}\cap \partial{\Omega}_{3,\es}$.} The system reads as 
%%%%%%%%%
%%%%%%%%%

%%%%%%%%
%%%%%%%%
\begin{equation}\label{epspb}
	\left\{
	\begin{array}{rlll}
		&\partial_t u_{i,\es} - \mu_{i,\es} \nabla \cdot ( u_{i,\es} \nabla p_{i,\es}) = u_{i,\es} G(p_{i,\es})  & \text{ in } (0,T)\times\Omega_{i,\es}, & i=1,2,3,\\[1em] 
		&\mu_{i,\es} u_{i,\es} \nabla p_{i,\es}\cdot \boldsymbol{n}_{i,i+1} =  \mu_{i+1,\es} u_{i+1,\es} \nabla p_{i+1,\es}\cdot \boldsymbol{n}_{i,i+1}  &\text{ on } (0,T)\times\Gamma_{i,i+1,\es}, & i = 1,2,\\[1em]
		&u_{i,\es} = u_{i+1,\es}  &\text{ on } (0,T) \times\Gamma_{i,i+1,\es}, & i = 1,2,\\[1em]
		&u_{i,\es}=0 &\text{ on } (0,T) \times\partial\Omega.
	\end{array}
	\right.
\end{equation}
We denote by $p_{i,\es}$ the density-dependent pressure, which is given by the following power law
\begin{equation*}
    p_{i,\es} = u_{i,\es}^{\gamma}, \quad \text{ with } \; \gamma > 1.
\end{equation*}

In this paper, we are interested in studying the convergence of System~\eqref{epspb} as $\es\rightarrow 0$. When the thickness of the thin layer decreases to zero, the membrane collapses to a limiting interface, $\tilde{\Gamma}_{1,3}$, which separates two domains denoted by $\tilde\Omega_1$ and $\tilde\Omega_3$, see Figure~\ref{fig:domains}. \CORR{Then, the domain turns out to be $\Omega=\tilde\Omega_1 \cup \tilde{\Gamma}_{1,3}  \cup\tilde\Omega_3$.} 
We derive in a rigorous way the \textit{effective problem}~\eqref{effectivepb}, and in particular, the transmission conditions on the limit density, $\tilde{u}$, across the effective interface. Assuming that \CORR{the mobility} coefficients satisfy $\mu_{i,\es} >0 $ for $i=1,3$ 
and
\begin{equation*}
\lim_{\es\to 0}\mu_{1,\es}=\tilde\mu_{1}\in (0,+\infty), \quad \qquad \lim_{\es\to 0}\frac{\mu_{2,\es}}{\es}=\tilde\mu_{1,3} \in (0,+\infty), \quad \qquad\lim_{\es\to 0} \mu_{3,\es}=\tilde\mu_{3}\in (0,+\infty),
\end{equation*}
we prove that\CORR{, in a weak sense, solutions of Problem~\eqref{epspb} converge to solutions of the following system }
\begin{equation}\label{effectivepb}
	\left\{
	\begin{array}{rlll}
		&\partial_t \tilde{u}_{i} - \tilde\mu_{i} \nabla \cdot ( \tilde{u}_{i} \nabla \tilde{p}_{i}) = \tilde{u}_{i} G(\tilde{p}_{i})  & \text{ in } (0,T) \times\tilde\Omega_{i},&  i=1,3,\\[1em]
		&\tilde\mu_{1,3} \llbracket \Pi \rrbracket= \tilde\mu_{1} \tilde{u}_{1} \nabla\tilde{p}_1 \cdot \boldsymbol{\tilde{n}}_{1,3} =  \tilde\mu_{3} \tilde{u}_{3} \nabla\tilde{p}_3\cdot \boldsymbol{\tilde{n}}_{1,3}  &\text{ on } (0,T)\times\tilde\Gamma_{1,3},\\[1em]
		&\tilde{u}=0 &\text{ on } (0,T)\times\partial\Omega,
	\end{array}
	\right.
\end{equation}
where $\Pi$ satisfies $\Pi'(u)= u p'(u)$, namely
\begin{equation*}
	\Pi(u) := \frac{\gamma}{\gamma +1} u ^{\gamma+1}.
\end{equation*}
We use the symbol $\llbracket(\cdot) \rrbracket$ to denote the jump across the interface $\tilde\Gamma_{1,3}$, \ie
\begin{equation}
\llbracket \Pi \rrbracket:=\frac{\gamma}{\gamma+1}(\tilde{u}^{\gamma+1})_3- \frac{\gamma}{\gamma+1}(\tilde{u}^{\gamma+1})_{1},
\end{equation}
where the subscript indicates that $(\cdot)$ is evaluated as the limit to a point of the interface coming from the subdomain $\tilde\Omega_1$, $\tilde\Omega_3$, respectively. 
\begin{figure}[H]
    \centering
    \includegraphics[width=16cm]{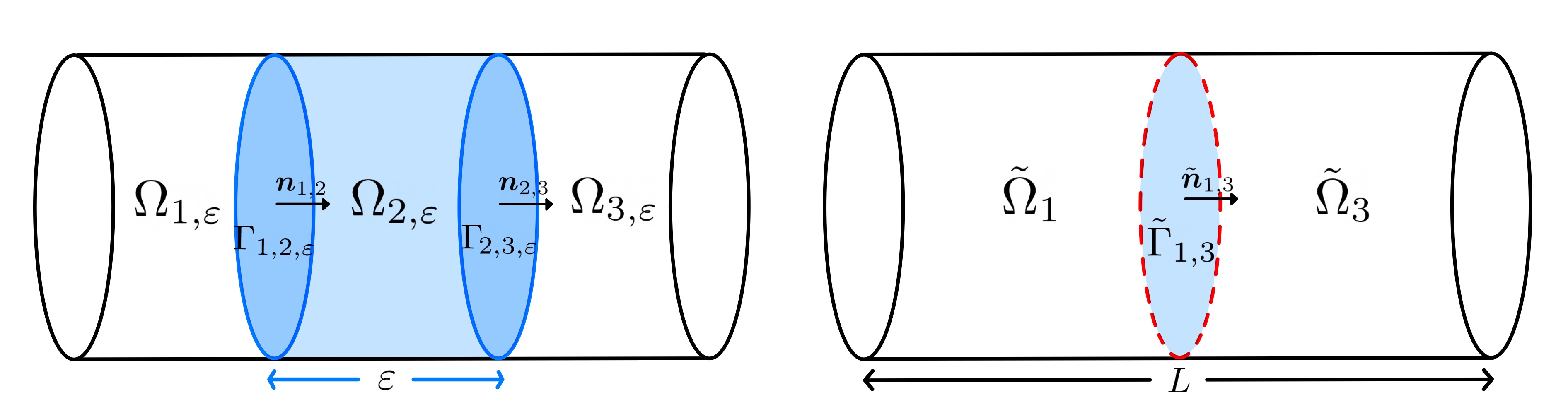}
    \caption{We represent here the bounded cylindrical domain $\Omega$ of length $L$. On the left, we can see the subdomains $\Omega_{i,\es}$ with related outward normals. The membrane $\Omega_{2,\es}$ of thickness $\es >0$ is delimited by $\Gamma_{i,i+1,\es}=\{x_3=\pm \es/2\}\cap\Omega$ which are symmetric with respect to the effective interface, $\tilde\Gamma_{1,3}=\{x_3=0\}\cap\Omega$. On the right, we represent the limit domain as $\es\rightarrow 0$. The effective interface, $\tilde\Gamma_{1,3}$, separates the two limit domains, $\tilde\Omega_1,\tilde\Omega_3$.}
    \label{fig:domains}
\end{figure}

\paragraph{Motivations and previous works.}

Nowadays, a huge literature can be found on the mathematical modeling of tumour growth {\color{black}, see, for instance, \cite{perthame,LFJCMWC, RCM, PT}}, on a domain $\Omega \subseteq \R^d$ (with $d=2,3$ for {\it in vitro} experiments, $d=3$ for {\it in vivo} tumours). Studying tumour's evolution, a crucial and challenging scenario is represented by cancer cells invasion through thin membranes. In particular, one of the most difficult barriers for the cells to cross is the \textit{basement membrane}. This kind of membrane separates the epithelial tissue from the connective one \CORR{(mainly consisting in extracellular matrix, ECM)}, providing a barrier that isolates malignant cells from the surrounding environment.
At the early stage, cancer cells proliferate locally in the epithelial tissue originating a carcinoma {\it in situ}. Unfortunately, cancer cells could mutate and acquire the ability to migrate by producing \textit{matrix metalloproteinases} (MMPs), specific enzymes which degrade the basement membrane\CORR{, allowing cancer cells to penetrate into it, invading the adjacent tissue.}    
A specific study can be done on the relation between MMP and their inhibitors as in Bresch 
\textit{et al.} \cite{bresch}. Instead, we are interested in modeling cancer transition from {\it in situ} stage to the invasive phase. \CORR{ This transition is described both by System~\eqref{epspb} and~\eqref{effectivepb}. In fact, for the both of them, the left domain can be interpreted as the domain in which the primary tumor lives, whereas the one on the right is the connective tissue. Between them, the basal membrane is penetrated by cancer cells either with a mobility coefficient (in the case of a nonzero thickness membrane) or with particular membrane conditions, in the case of a zero-thickness interface. }
 
Since in biological systems the membrane is often much smaller than the size of the other components, it is then convenient and reasonable to approximate the membrane as a zero-thickness one, as done in \cite{gallinato, giverso}, differently from \cite{bresch}. In particular, it is possible to mathematically describe cancer invasion through a zero thickness interface considering a limiting problem defined on two domains. The system is then closed by \textit{transmission conditions} on the effective interface which generalise the classical Kedem-Katchalsky conditions. The latter were first formulated in \cite{KK} and are used to describe different diffusive phenomena, such as, for instance, the transport of molecules through the cell/nucleus membrane \cite{cangiani, dimitrio, serafini}, solutes absorption processes through the arterial wall \cite{QVZ}, the transfer of chemicals through thin biological membranes \cite{calabro}, or the transfer of ions through the interface between two different materials \cite{bulicek}. \CORR{In our description, the transmission conditions define continuity of cells density flux through the effective interface $\tilde\Gamma_{1,3}$ and their proportionality to the jump of a term linked to cells pressure. The coefficient of proportionality is related to the permeability of the effective interface with respect to a specific population.}

For these reasons, studying  the convergence as the thickness of the membrane tends to zero represents a relevant and interesting problem both from a biological and mathematical point of view.
In the literature, this limit has been studied in different fields of applications other than tumour invasion, such as, for instance, thermal, electric or magnetic conductivity, \cite{sanchez-palencia, LRWZ}, or transport of drugs and ions through an heterogeneous layer, \cite{neuss-radu}. Physical, cellular and ecological applications characterised the bulk-surface model and the dynamical boundary value problem, derived in \cite{wang} in the context of boundary adsorption-desorption of diffusive substances between a bulk (body) and a surface. Another class of limiting systems is offered by \cite{LiWang}, in the case in which the diffusion in the thin membrane is not as small as its thickness. Again, this has a very large application field, from thermal barrier coatings (TBCs) for turbine engine blades to the spreading of animal species, from commercial pathways accelerating epidemics to cell membrane.

As it is now well-established, see for instance \cite{BD}, living tissues behave like compressible fluids. Therefore, in the last decades, mathematical models have been more and more focusing on the fluid mechanical aspects of tissue and tumour development, see for instance \cite{BC96, Green, BD, giverso, perthame, BCGR}. Tissue cells move through a porous embedding, such as the extra-cellular matrix (ECM). This nonlinear and degenerate diffusion process is well captured by filtration-type equations like the following, rather than the classical heat equation,
\begin{equation}\label{filtration}
		\partial_t u + \nabla \cdot (u {\bf v}) = F(u), \quad \text{ for }  t>0, \; x \in \Omega.
\end{equation}
Here $F(u)$ represents a generic density-dependent reaction term and the model is closed with the velocity field equation
\begin{equation}\label{eq: darcy}
    	{\bf v}:=-\mu \nabla p,
\end{equation}
and a density-dependent law of state for the pressure $p:=f(u).$
The function $\mu=\mu(t,x)\ge 0$ represents the cell mobility coefficient and the velocity field equation corresponds to the Darcy law of fluid mechanics. This relation between the velocity of the cells and the pressure gradient reflects the tendency of the cells to move away from regions of high compression. 

Our model is based on the one by Chaplain \textit{et al.} \cite{giverso}, where the authors formally recover the \textit{effective interface problem}, analogous to System~\eqref{effectivepb}, as the limit of a transmission problem, (or \textit{thin layer problem}) \cf System~\eqref{epspb}, when the thickness of the membrane converges to zero. They also validate through \CORR{ simulations the numerical equivalence between the two models.}
When shrinking the membrane $\Omega_{2,\es}$ to an infinitesimal region, $\tilde\Gamma_{1,3}$, (\ie when passing to the limit $\es\rightarrow 0$, where $\es$ is proportional to the thickness of the membrane), it is important to guarantee that the effect of the thin membrane on \CORR{cell invasion} remains preserved.
To this end, it is essential to make the following assumption on the mobility coefficient in the subdomain $\Omega_{2,\es}$,
\begin{equation*}
   \mu_{2,\es} \xrightarrow{\es\to 0}0 \qquad \text{ such that } \qquad \frac{\mu_{2,\es}}{\es}\xrightarrow{\es\to 0} \tilde\mu_{1,3}.
\end{equation*}
This condition implies that, when shrinking the pores of the membrane, the local permeability of the layer decreases to zero proportionally with respect to the local shrinkage. The function $\tilde\mu_{1,3}$ represents the \textit{effective permeability coefficient} of the limiting interface $\tilde\Gamma_{1,3}$, \ie the permeability of the zero-thickness membrane. We refer the reader to \cite[Remark 2.4]{giverso} for the derivation of the analogous assumption in the case of a fluid flowing through a porous medium. In \cite{giverso}, the authors derive the effective transmission conditions on the limiting interface, $\tilde\Gamma_{1,3}$, which relates the jump of the quantity $\Pi:= \Pi(u)$, defined by $\Pi'(u)=u f'(u)$ and the normal flux across the interface, namely
\begin{equation*}
    \tilde\mu_{1,3}  \llbracket \Pi \rrbracket= \tilde\mu_i \tilde{u}_i \nabla f( \tilde{u}_i)  \cdot \tilde{\boldsymbol{n}}_{1,3} =  \tilde\mu_i \nabla \Pi(\tilde{u}_i)  \cdot \tilde{\boldsymbol{n}}_{1,3}, \quad \text{ for } i=1,3 \quad\text{ on } \tilde\Gamma_{1,3}. \ \footnote{This equation is reported in \cite[Proposition 3.1]{giverso}, where we adapted the notation to that of our paper.}
\end{equation*}
These conditions turns out to be the well-known Kedem-Katchalsky interface conditions when $f(u):= \ln(u)$, for which $\Pi(u)= u + C$, $C\in \R$, \ie the linear diffusion case.

In this paper, we provide a rigorous proof to the derivation of these limiting transmission conditions, for a particular choice of the pressure law. To the best of our knowledge, this question has not been addressed before in the literature for a non-linear and degenerate model such as System~\eqref{epspb}. Although our system falls into the class of models formulated by Chaplain \textit{et al.}, we consider a less general case, making some choices on the quantities of interest. First of all, for the sake of simplicity, we assume the mobility coefficients $\mu_{i,\es}$ to be positive constants, hence they do not depend on time and space as in \cite{giverso}. We take a reaction term of the form $u G(p)$, where $G$ is a pressure-penalized growth rate. Moreover, we take a power-law as pressure law of state, \ie $p= u^\gamma$, with $\gamma\ge 1$. Hence, our model turns out to be in fact a porous medium type model, since Equations~(\ref{filtration}, \ref{eq: darcy}) read as follows
\begin{equation*}
    \partial_t u - \frac{\gamma}{\gamma+1}\Delta u^{\gamma+1} = u G(p), \quad \text{ for }  t>0, \; x \in \Omega.
\end{equation*}
The nonlinearity and the degeneracy of the porous medium equation (PME) bring several additional difficulties to its analysis compared to its linear and non-degenerate counterpart. In particular, the main challenge is represented by the emergence of a free boundary, which separates the region where $u>0$ from the region of vacuum. On this interface the equation degenerates, affecting the control and the regularity of the main quantities. For example, it is well-known that the density can develop jumps singularities, therefore preventing any control of the gradient in $L^2$, opposite to the case of linear diffusion. On the other hand, using the fundamental change of variables of the PME, $p=u^\gamma$, and studying the equation on the pressure rather than the equation on the density, turns out to be very useful when searching for better regularity of the gradient. Nevertheless, since the pressure presents "corners" at the free boundary, it is not possible to bound its laplacian in $L^2$ (uniformly on the entire domain). 

For these reasons, we could not straightforwardly apply some of the methods previously used in the literature in the case of linear diffusion. For instance, the result in \cite{BCF} is based on proving $H^2$-\textit{a priori} bounds, which do not hold in our case. The authors consider elliptic equations in a domain divided into three subdomains, each one contained into the interior of the other. The coefficients of the second-order terms are assumed to be piecewise continuous with jumps along the interior interfaces. Then, the authors study the limit as the thickness of the interior reinforcement tends to zero. 
In \cite{sanchez-palencia}, Sanchez-Palencia studies the same problem in the particular case of a lense-shaped region, $I_\es$, which shrinks to a smooth surface in the limit, facing also the parabolic case. The approach is based on $H^1$-\textit{a priori} estimates, namely the $L^2$-boundedness of the gradient of the unknown. Considering the variational formulation of the problem, the author is able to pass to the limit upon applying an extension operator. In fact, if the mobility coefficient in $I_\es$ converges to zero proportionally with respect to $\es$, it is only possible to establish uniform bounds outside of $I_\es$. The extension operator allows to "truncate" the solution and then "extend" it into $I_\es$ reflecting its profile from outside. Therefore, making use of the uniform control outside of the $\es$-thickness layer, the author is able to pass to the limit in the variational formulation. 
Let us also mention that, in the literature, one can find different methods and strategies for reaction-diffusion problems with a thin layer. For instance, in \CORR{\cite{maruvsic} the notion of two-scale convergence for thin domains is introduced which allows
the rigorous derivation of lower dimensional models. Some other papers have deepened the case of  heterogeneous membrane. We cite} \cite{neuss-radu}, \CORR{where} the authors develop a multiscale method which combines classical compactness results based on \textit{a priori estimates} and weak-strong two-scale convergence results in order to be able to pass to the limit in a thin heterogeneous membrane. \CORR{In \cite{gahn2022}, a transmission problem involving nonlinear diffusion in the thin layer is treated
and an effective model was derived. Finally, in \cite{gahn2021}, the accuracy of the effective approximations for processes through thin layers is studied by proving estimates for the difference between the original and the
effective quantities.
}
The passage at the limit allows to infer the existence of weak solutions for the effective Problem~\eqref{effectivepb}\CORRdeux{, thanks to the existence result for the $\es-$problem provided in Appendix \ref{appendix:existence}}. In the case of linear diffusion, the existence of global weak solutions for the effective problem with the Kedem-Katchalsky conditions is provided by \cite{ciavolella-perthame}. In particular, the authors prove it under weaker hypothesis such as $L^1$ initial data and reaction terms with sub-quadratic growth in an $L^1$-setting.

\paragraph{Outline of the paper.} 
%%%%%%%%%%%%%%%%%%%%%%%%%%%%%%%%%%%%%

%%%%%%%%%%%%%%%%%%%%%%%%%%%%%%%%%%%%%%%%

The paper is organised as follows. In Section~\ref{hp}, we introduce the assumptions and notations, including the definition of weak solution of the original problem, System~\eqref{epspb}. In Section~\ref{sec: a priori}, \textit{a priori} estimates that will be useful to pass to the limit are proven.

Section~\ref{sec: limit} is devoted to prove the convergence of Problem~\eqref{epspb}, following the method introduced in \cite{sanchez-palencia} for the (non-degenerate) elliptic and parabolic cases.
The argument relies on recovering the $L^2$-boundedness (uniform with respect to $\es$) of the velocity field, in our case, the pressure gradient. As one may expect, since the permeability of the membrane, $\mu_{2,\es}$, tends to zero proportionally with respect to $\es$, it is only possible to establish a uniform bound outside of $\Omega_{2,\es}$. For this reason, following \cite{sanchez-palencia}, we introduce an extension operator (Subsection~\ref{subsec: extension}) and apply it to the pressure in order to extend the $H^1$-uniform bounds in the whole space $\Omega\setminus\tilde\Gamma_{1,3}$, hence proving compactness results.
We remark that the main difference between the strategy in \cite{sanchez-palencia} and our adaptation, is given by the fact that due to the non-linearity of the equation, we have to infer strong compactness of the pressure (and consequently of the density) in order to pass to the limit in the variational formulation. For this reason, we also need the $L^1$-boundedness of the time derivative, hence obtaining compactness with a standard Sobolev's embedding argument.
Moreover, since \commentout{the solution}\CORR{solutions} to the limit Problem~\eqref{effectivepb} will present discontinuities at the effective interface, we need to build proper test functions which belong to $H^1(\Omega\setminus \tilde \Gamma_{1,3})$ that are zero on $\partial \Omega$ and are discontinuous across $\tilde\Gamma_{1,3}$, (Subsection~\ref{subsec: testfnc}).

Finally, using the compactness obtained thanks to the extension operator, we are able to prove the convergence of \commentout{a solution}\CORR{solutions} to Problem~\eqref{epspb} to \commentout{a couple}\CORR{couples}  $(\tilde u, \tilde p)$ which \commentout{satisfies}\CORR{satisfy} Problem~\eqref{effectivepb} in a weak sense, therefore inferring the existence of \commentout{a solution}\CORR{solutions} of the effective problem, as stated in the following theorem. 

\begin{thm}[Convergence to the effective problem]\label{thm: main}
\CORR{Solutions of Problem~\eqref{epspb} converge weakly to solutions $(\tilde{u},\tilde{p})$ of Problem~\eqref{effectivepb} in the following weak form}
\begin{equation}\label{eq: variational limit problem}
\begin{split}
        -\int_0^T\int_{\Omega} \tilde{u} & \partial_t w +\tilde\mu_1 \int_0^T \int_{\tilde\Omega_1}   \tilde{u}  \nabla \tilde{p} \cdot \nabla w + \tilde\mu_3 \int_0^T\int_{\tilde\Omega_3}   \tilde{u}  \nabla \tilde{p} \cdot \nabla w \\
        &+ \tilde\mu_{1,3}\int_0^T\int_{\tilde\Gamma_{1,3}} \llbracket \Pi \rrbracket \prt*{w_{|x_3=0^+}- w_{|x_3=0^-}} = \int_0^T\int_{\Omega} \tilde{u}G(\tilde{p}) w + \int_{\Omega} \tilde{u}^0 w^0,
        \end{split}
\end{equation}
for all test functions $w(t,x)$ with a proper regularity (defined in Theorem~\ref{thm: existence}) and $w(T,x) = 0$ a.e. in $\Omega$. We used the notation
\[
\llbracket \Pi \rrbracket:=\frac{\gamma}{\gamma+1}(\tilde{u}^{\gamma+1})_{|x_3=0^+}- \frac{\gamma}{\gamma+1}(\tilde{u}^{\gamma+1})_{|x_3=0^-},
\]
\CORR{and $(\cdot)_{|x_3=0^-}= \curlyT_1 (\cdot)$ as well as $(\cdot)_{|x_3=0^+}= \curlyT_3 (\cdot)$, with $\curlyT_1, \curlyT_3$ the trace operators defined in Section~\ref{hp}.}
\end{thm}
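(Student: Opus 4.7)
The plan is to pass to the limit $\es\to 0$ in a suitable weak formulation of System~\eqref{epspb}, combining the \textit{a priori} estimates of Section~\ref{sec: a priori} with the extension operator of Subsection~\ref{subsec: extension} and the discontinuous test functions of Subsection~\ref{subsec: testfnc}. The scheme follows the spirit of the Sanchez--Palencia argument \cite{sanchez-palencia} for linear conduction through a thin layer, but it must be adapted substantially: the nonlinear flux $u_\es\nabla p_\es$ requires \emph{strong} compactness of $p_\es$ (and of $u_\es$) rather than mere weak convergence, and the degeneracy of the porous-medium equation prevents any uniform $H^2$-control on the pressure.

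First I would assemble from Section~\ref{sec: a priori} uniform-in-$\es$ bounds on $u_\es$ in $L^\infty\prt*{(0,T)\times\Omega}$, on $\sqrt{\mu_{i,\es}}\,\nabla p_\es$ in $L^2\prt*{(0,T)\times\Omega_{i,\es}}$ for $i=1,2,3$, and on $\partial_t u_\es$ in a sufficiently weak norm. On $\Omega_{1,\es}\cup\Omega_{3,\es}$ these estimates directly yield uniform $H^1$-control of $p_\es$, while inside $\Omega_{2,\es}$ the factor $\sqrt{\mu_{2,\es}}\sim\sqrt{\es}$ destroys it. To overcome this, I would apply the extension operator $P_\es$ of Subsection~\ref{subsec: extension} to $p_\es$ and recover a uniform $H^1(\Omega\setminus \tilde\Gamma_{1,3})$ bound. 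A standard Aubin--Lions/Sobolev compactness argument, fed by the time-derivative estimate, then provides (up to a subsequence) strong $L^2$-convergence $p_\es\to\tilde p$ on $\Omega\setminus\tilde\Gamma_{1,3}$, hence $u_\es=p_\es^{1/\gamma}\to\tilde u$, together with $\nabla p_\es\rightharpoonup\nabla\tilde p$ weakly in $L^2$ on each $\tilde\Omega_i$.

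Once compactness is available, I would test the weak formulation of~\eqref{epspb} against the admissible discontinuous functions $w\in H^1(\Omega\setminus\tilde\Gamma_{1,3})$ of Subsection~\ref{subsec: testfnc} and split the flux integral into outer and membrane contributions. The outer terms $\mu_{i,\es}\int_0^T\!\!\int_{\Omega_{i,\es}} u_\es\nabla p_\es\cdot\nabla w$ ($i=1,3$) pass to the limit by a weak--strong pairing of $\nabla p_\es$ against $u_\es\nabla w$. The delicate contribution is the membrane term $\mu_{2,\es}\int_0^T\!\!\int_{\Omega_{2,\es}}\nabla\Pi(u_\es)\cdot\nabla w$, where $u\nabla p=\nabla\Pi(u)$ with $\Pi(u)=\gamma u^{\gamma+1}/(\gamma+1)$. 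Integrating by parts in the thin direction $x_3\in(-\es/2,\es/2)$ and Taylor-expanding $w$ at first order in $\es$, this reduces at leading order to
\begin{equation*}
\frac{\mu_{2,\es}}{\es}\int_0^T\!\!\int_{\tilde\Gamma_{1,3}}\prt*{\Pi(u_\es)_{|x_3=\es/2}-\Pi(u_\es)_{|x_3=-\es/2}}\prt*{w_{|x_3=0^+}-w_{|x_3=0^-}}+o(1),
\end{equation*}
the interior remainders vanishing because $|\Omega_{2,\es}|\to 0$. Since $\mu_{2,\es}/\es\to\tilde\mu_{1,3}$ and the traces of $\Pi(u_\es)$ on the membrane interfaces converge strongly to $\Pi(\tilde u_i)_{|\tilde\Gamma_{1,3}}$ for $i=1,3$, this produces exactly the jump term $\tilde\mu_{1,3}\int_0^T\!\!\int_{\tilde\Gamma_{1,3}}\llbracket\Pi\rrbracket(w_{|x_3=0^+}-w_{|x_3=0^-})$ appearing in~\eqref{eq: variational limit problem}.

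The main obstacle is precisely the identification of these membrane traces: the operator $P_\es$ only controls $p_\es$ uniformly away from the collapsing slab, so one must argue that the traces of $p_\es$ on the moving surfaces $\Gamma_{i,i+1,\es}$ are asymptotically indistinguishable from the one-sided traces of $\tilde p$ on $\tilde\Gamma_{1,3}$. I expect this to rest on a thin-layer estimate of the form $\|P_\es(p_\es)(\cdot,x_3)-P_\es(p_\es)(\cdot,0^\pm)\|_{L^2}\le C\sqrt{|x_3|}$, combined with the strong $L^2$-compactness already obtained on $\Omega\setminus\tilde\Gamma_{1,3}$. A secondary hurdle is passing to the limit in the nonlinear quantity $\Pi(u_\es)$, which truly demands the strong (not weak) convergence of $u_\es$ and cannot be handled by energy bounds alone; this is what the Aubin--Lions step, supported by the $L^\infty$-bound and the time-derivative estimate, is designed to deliver. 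Once these two points are secured, the limit couple $(\tilde u,\tilde p)$ satisfies~\eqref{eq: variational limit problem} and thereby also provides a weak solution to the effective Problem~\eqref{effectivepb}.
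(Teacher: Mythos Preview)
Your overall strategy coincides with the paper's: extend by $\curlyP_\es$, obtain strong compactness of $p_\es$ and $u_\es$ via the $L^1$ time-derivative bound and an Aubin--Lions argument, and pass to the limit term by term. There is, however, one genuine gap in how you treat the test functions.

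You write that you would ``test the weak formulation of~\eqref{epspb} against the admissible discontinuous functions $w\in H^1(\Omega\setminus\tilde\Gamma_{1,3})$''. This is not allowed: the $\es$-formulation~\eqref{eq:variational-pb-homo} requires $\psi\in H^1(0,T;H^1_0(\Omega))$, and any such $\psi$ has $\psi_{|x_3=0^+}=\psi_{|x_3=0^-}$, so the jump term cannot be generated. The paper's remedy, which you allude to but do not use, is the operator $L_\es$ of Subsection~\ref{subsec: testfnc}: given the discontinuous $w$, one tests~\eqref{eq:variational-pb-homo} against $L_\es(w)\in H^1_0(\Omega)$, which equals $w$ outside $\Omega_{2,\es}$ and is the \emph{linear-in-$x_3$} interpolation between $w(\cdot,-\es/2)$ and $w(\cdot,\es/2)$ inside. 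This is not a Taylor expansion of $w$ (there is none across its discontinuity); the point is that $\partial_{x_3}L_\es(w)=\bigl(w_{|x_3=\es/2}-w_{|x_3=-\es/2}\bigr)/\es$ \emph{exactly} and is independent of $x_3$ on $\Omega_{2,\es}$, so Fubini converts the membrane volume integral $\mu_{2,\es}\int_{\Omega_{2,\es}}\partial_{x_3}(u_{2,\es}^{\gamma+1})\,\partial_{x_3}L_\es(w)$ into the trace expression with the factor $\mu_{2,\es}/\es$. The tangential contributions vanish because $|\partial_{x_1}L_\es(w)|,|\partial_{x_2}L_\es(w)|\le C$ and $|\Omega_{2,\es}|\sim\es$.

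A secondary issue: strong $L^2(\Omega\setminus\tilde\Gamma_{1,3})$ compactness does not by itself control traces on $\tilde\Gamma_{1,3}$. The paper runs Aubin--Lions with the chain $H^1(\Omega\setminus\tilde\Gamma_{1,3})\subset\subset H^\beta(\Omega\setminus\tilde\Gamma_{1,3})\subset L^1$ for $\tfrac12<\beta<1$, obtaining strong convergence of $\curlyP_\es(u_\es^{\gamma+1})$ in $L^2(0,T;H^\beta)$; the continuity of the trace $H^\beta\to L^2(\tilde\Gamma_{1,3})$ then gives $(\curlyP_\es(u_\es^{\gamma+1}))_{|x_3=0^\pm}\to(\tilde u^{\gamma+1})_{|x_3=0^\pm}$ in $L^2((0,T)\times\tilde\Gamma_{1,3})$. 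Your thin-layer estimate is indeed used, but only for the remaining comparison between traces at $x_3=\pm\es/2$ and at $x_3=0^\pm$.
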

\noindent Section~\ref{sec: conclusion} concludes the paper and provides some research perspectives.

%%%%%%%%%%%%%%%%%%%% ASSUMPTIONS AND NOTATIONS  %%%%%%%%%%%%

\section{Assumptions and notations} \label{hp} 
Here, we detail the problem setting and assumptions.
For the sake of simplicity, we consider as domain $\Omega\subset \R^3$ a cylinder with axis $x_3$, see Figure~\ref{fig:domains}. Let us notice that it is possible to take a more general domain $\hat \Omega$ defining a proper diffeomorfism $F: \hat \Omega \to \Omega $. Therefore, the results of this work extend to more general domains as long as the existence of the map $F$ can be proved (this implies that $\hat \Omega$ is a connected open subset of $\R^d$ and has a smooth boundary). Therefore, we assume that the domain $\Omega$ has a $C^1$-piecewise boundary. We also want to emphasize the fact that our proofs hold in a 2D domain considering three rectangular subdomains.  
We introduce
\begin{equation*}
u_{\es} :=\left\{
\begin{array}{ll}
u_{1,\es}, \quad \mbox{ in } \; \Omega_{1,\es},\\[0.1em]
u_{2,\es}, \quad \mbox{ in } \; \Omega_{2,\es},\\[0.1em]
u_{3,\es}, \quad \mbox{ in } \; \Omega_{3,\es},
\end{array}
\right.
\qquad
  p_{\es} := \left\{
\begin{array}{ll}
p_{1,\es}, \quad \mbox{ in } \; \Omega_{1,\es},\\[0.1em]
p_{2,\es}, \quad \mbox{ in } \; \Omega_{2,\es},\\[0.1em]
p_{3,\es}, \quad \mbox{ in } \; \Omega_{3,\es}.
\end{array}
\right.
\end{equation*}

We define the interfaces between the domains $\Omega_{i,\es}$ and $\Omega_{i+1,\es}$ for $i=1,2$, as
\[
    \Gamma_{i,i+1,\es} = \partial \Omega_{i,\es} \cap\, \partial \Omega_{i+1,\es}.
\]
We denote with $\boldsymbol{n}_{i,i+1}$ the outward normal to $\Gamma_{i,i+1,\es}$ with respect to $\Omega_{i,\es}$, for $i=1,2$. Let us notice that $\boldsymbol{n}_{i,i+1}=- \boldsymbol{n}_{i+1,i}$. 

\CORR{
We define two trace operators
\[
\begin{cases}
\curlyT_1:  W^{k,p}(\tilde \Omega_1) \longrightarrow L^{p}(\partial \tilde \Omega_1),\\
\curlyT_3:  W^{k,p}(\tilde \Omega_3) \longrightarrow L^{p}(\partial \tilde \Omega_3),
\end{cases} 
    \quad \mbox{ for } 1\leq p < +\infty, \quad k\geq 1 .
\]
Therefore, for any $z  \in W^{k,p}(\Omega \setminus \tilde\Gamma_{1,3})$, we have the following decomposition 
\[
     z := \begin{cases}
     z_1,\quad \text{in} \quad \tilde \Omega_1,\\
     z_3,\quad \text{in} \quad \tilde \Omega_3.
     \end{cases}
\]
Obviously, we have that $z_\alpha \in  W^{k,p}(\tilde \Omega_\alpha)$ ($\alpha =1,3$). Thus, we denote 
\[
z_{|_{\partial \tilde \Omega_\alpha} }:=\curlyT_\alpha z \in L^{p}(\partial \tilde \Omega_\alpha ),\quad \alpha=1,3,
\]
and the following continuity property holds~\cite{brezis}
\begin{equation*}
    \|\curlyT_\alpha z\|_{L^{p}(\partial \tilde \Omega_\alpha )} \leq C \|z\|_{W^{k,p}(\tilde \Omega_\alpha) },\quad \alpha=1,3.
\end{equation*}
We assume $W^{k,p}(\Omega \setminus \tilde\Gamma_{1,3})$ is endowed with the norm
\CORRdeux{
\[ \| z\|_{W^{k,p}(\Omega \setminus \tilde\Gamma_{1,3})}= \|z\|_{L^p(\Omega \setminus \tilde\Gamma_{1,3})}+ \sum_{j=1}^k \| D^j z\|_{L^p(\Omega \setminus \tilde\Gamma_{1,3})}. \]
}}

We make the following assumptions on the initial data: there exists a positive constant $p_H$, such that
\begin{equation}\label{eq: assumptions initial data} \tag{A-data1}
   0\le p^0_\es \le p_H, \qquad 0\le u^0_\es \le p_H^{1/\gamma} =: u_H,  
\end{equation}
\begin{equation}\label{eq: assumptions initial data 2} \tag{A-data2}
   \Delta \prt*{(u^0_{i,\es})^{\gamma+1}} \in L^1(\Omega_{i,\es}), \quad \text{ for } i=1,2,3.
\end{equation} 
Moreover, we assume that there exists a function $\tilde u_0\in L^1_+(\Omega)$ \CORR{(\ie $ \tilde u_0\in L^1(\Omega)$ and non-negative)} such that
\begin{equation}\label{conv u0} \tag{A-data3}
    \|u_{\es}^0 - \tilde u_{0}\|_{L^1(\Omega)}\longrightarrow 0, \quad\text{ as } \es\rightarrow 0.
\end{equation}
The growth rate $G(\cdot)$ satisfies
\begin{equation}\label{eq: assumption G} \tag{A-G}
    G(0)=G_M>0, \quad G'(\cdot)<0, \quad G(p_H)=0.
\end{equation}
The value $p_H$, called \textit{homeostatic pressure}, represents the lowest level of pressure that prevents cell multiplication due to contact-inhibition.

We assume that the \CORR{mobility} coefficients satisfy $\mu_{i,\es} >0 $ for $i=1,3$ 
and
\begin{equation}
    \lim_{\es\to 0}\mu_{1,\es}=\tilde\mu_{1}>0, \qquad \qquad \lim_{\es\to 0}\frac{\mu_{2,\es}}{\es}=\tilde\mu_{1,3}>0, \qquad \qquad\lim_{\es\to 0} \mu_{3,\es}=\tilde\mu_{3}>0.
    \label{eq:cond-mu}
\end{equation}

\noindent{\textbf{Notations.}} For all $T>0$, we denote $\Omega_T:=(0,T)\times\Omega$. We use the abbreviated form ${u_\es:=u_\es(t):=u_\es(t,x).}$ From now on, we use $C$ to indicate a generic positive constant independent of $\es$ that may
change from line to line. Moreover, we denote 
	\begin{equation*}
	\sign_+{(w)}=\mathds{1}_{\{w>0\}},\quad  \quad \sign_-{(w)}=-\mathds{1}_{\{w<0\}},
	\end{equation*}
	and
	\[
	    \sign(w) = 	\sign_+{(w)} + 	\sign_-{(w)}.
	\]
	We also define the positive and negative part of $w$ as follows
	\begin{equation*}
	{\color{black}(w)_+} :=
	\begin{cases}
	w, &\text{ for } w>0,\\
	0, &\text{ for } w\leq 0,
	\end{cases}
	\quad  \text{ and }\quad
	{\color{black}(w)_-} :=
	\begin{cases}
	-w, &\text{ for } w<0,\\
	0, &\text{ for } w\geq 0.
	\end{cases}
	\end{equation*} 
{\color{black} We denote $|w|:=(w)_++(w)_-$.}
\bigskip

\noindent
Now, let us write the variational formulation of Problem~\eqref{epspb}.
\begin{definition}[Definition of weak solutions]
Given $\es>0$, a weak solution to Problem~\eqref{epspb} is given by $u_\es, p_\es \in L\CORR{^\infty}(0,T;L^\infty(\Omega))$ such that $\nabla p_\es \in L^2(\Omega_T)$ and
\begin{equation}
      -  \int_0^T\int_{\Omega} u_\es \partial_t \psi   + \sum_{i=1}^3  \mu_{i,\es} \int_0^T\int_{\Omega_{i,\es}} u_{i,\es} \nabla p_{i,\es} \cdot \nabla \psi  =  \int_0^T\int_{\Omega} u_{\es} G(p_{\es}) \psi  + \int_{\Omega} u^0_\es \psi (0,x) ,
        \label{eq:variational-pb-homo}
\end{equation} 
for all test functions $\psi \in H^1(0,T; H^1_0(\Omega))$ such that $\psi(T,x)=0$ a.e. in $\Omega$.
\end{definition} 
%\begin{remark}
%\commentout{The existence of such solutions can be proved by a straightforward application of standard techniques for nonlinear parabolic equations (see, for instance, \cite{vazquez-book}).}
%\CORR{We leave as an open problem to prove the existence of such solutions and we suggest for instance, \cite{vazquez-book}, which shows standard techniques for nonlinear parabolic equations, and \cite{brezis} which deals with existence for elliptic problems.}  

%\end{remark}

\section{A priori estimates}\label{sec: a priori}
We show that the main quantities satisfy some uniform \textit{a priori} estimates which will later allow us to prove strong  compactness and pass to the limit. 
\begin{lemma}[A priori estimates]\label{lemma: a priori}
Given the assumptions in Section~\ref{hp}, let $(u_\es, p_\es)$ be \CORR{a solution} of Problem~\eqref{epspb}. 
{\color{black} There exists a positive constant $C$ independent of $\es$ such that
%The following properties hold uniformly with respect to $\es$.
\begin{itemize}
    \item[\textit{(i)}] 
    $0\le u_{\es}\le u_H$ and   $0\le p_{\es}\le p_H$,  
    \commentout{\item[\textit{(ii)}] 
    $\|u_{\es}\|_{L^\infty(0,T; L^p(\Omega))}\leq C, \; \|p_{\es}\|_{L^\infty(0,T; L^p(\Omega))} \leq C$, for $1\le p < \infty$,}
    \item[\textit{(ii)}] 
    $\|\partial_t u_\es\|_{L^\infty(0,T; L^1(\Omega))}\leq C,\; \|\partial_t p_\es\|_{L^\infty(0,T; L^1(\Omega))}\leq C$,
    \item[\textit{(iii)}] 
    $\|\nabla p_\es\|_{L^2(0,T; L^2(\Omega\setminus\Omega_{2,\es}))}\leq C$.
\end{itemize}}
\end{lemma}
\CORR{\begin{remark}
We remark that statement (i) implies that for all $p\in [1,\infty]$, we have \[\|u_{\es}\|_{L^\infty(0,T; L^p(\Omega))}\leq C, \; \|p_{\es}\|_{L^\infty(0,T; L^p(\Omega))} \leq C.\]
\end{remark}}
\CORRdeux{\begin{remark}
The following proof can be made rigorous by performing a parabolic regularization of the problem, namely by adding $\delta \Delta u_{i,\es}$, for $\delta>0$, to the left-hand side of the equation and in the flux continuity conditions. In fact, the following estimates can be obtained uniformly both in $\es$ and $\delta$.
\end{remark}}
\begin{proof}
Let us recall the equation satisfied by $u_\es$ on $\Omega_{i,\es}$, namely
\begin{equation}\label{eq: u}
    \partial_t u_{i,\es} - \mu_{i,\es} \nabla \cdot ( u_{i,\es} \nabla u_{i,\es}^{\gamma} )= u_{i,\es} G(p_{i,\es}).
\end{equation} 
\noindent{\bf(i)} $\boldsymbol{ 0\le u_{\es}\le u_H,\quad 0\le p_{\es}\le p_H.}$ \

%\AP{This point is actually obtained by the existence proof when $\es >0$ (see~\cite{vasquez}).}
The $L^\infty$-bounds of the density and the pressure are a straight-forward consequence of the comparison principle applied to Equation~\eqref{eq: u}, which can be rewritten as
\begin{equation}\label{eq: u2}
    \partial_t u_{i,\es} - \frac{\gamma}{\gamma+1} \mu_{i,\es} \Delta u_{i,\es}^{\gamma+1}= u_{i,\es} G(p_{i,\es}).
\end{equation} 
Indeed, summing up Equations~\eqref{eq: u2} for $i=1,2,3$, we obtain
\begin{equation}\label{eq: usum}
    \sum_{i=1}^3 \partial_t  u_{i,\es} - \frac{\gamma}{\gamma+1} \sum_{i=1}^3 \mu_{i,\es} \Delta u_{i,\es}^{\gamma+1}= \sum_{i=1}^3 u_{i,\es} G(p_{i,\es}).
\end{equation}
Then, we also have 
\begin{equation*}
    \sum_{i=1}^3 \partial_t (u_H-u_{i,\es})= \frac{\gamma}{\gamma+1} \sum_{i=1}^3 \mu_{i,\es} \Delta (u_H^{\gamma+1}-u_{i,\es}^{\gamma+1})+\sum_{i=1}^3 (u_H-u_{i,\es}) G(p_{i,\es}) - u_H\sum_{i=1}^3 G(p_{i,\es}).
\end{equation*}
%\AP{It seems to me that in the following $(\cdot)_-$ should be replaced by $|\cdot|_-$, no?}
%\tr{mmh yes I agree} {\color{black} we can modify the definition}
\noindent
{\color{black}Let us recall Kato's inequality, \cite{kato}, \ie
$$\Delta (u)_-\geq \sign_-(u) \Delta u.$$}
If we multiply by $\sign_{-}(u_H-u_{i,\es})$, thanks to Kato's inequality, we infer that
\begin{equation}\label{eq: sign -}
\begin{split}
     \sum_{i=1}^3 \partial_t (u_H-u_{i,\es})_{-}&\leq \sum_{i=1}^3 \left[ \frac{\gamma}{\gamma+1} \mu_{i,\es} \Delta (u_H^{\gamma+1}-u_{i,\es}^{\gamma+1})_{-}+ (u_H-u_{i,\es})_{-} G(p_{i,\es}) \right.\\[0.3em]
     &\qquad \qquad - u_H   G(p_{i,\es})\sign_{-}(u_H-u_{i,\es})\Big]\\[0.3em]
     &\leq \sum_{i=1}^3\left[\frac{\gamma}{\gamma+1} \mu_{i,\es} \Delta (u_H^{\gamma+1}-u_{i,\es}^{\gamma+1})_{-}+ (u_H-u_{i,\es})_{-} G(p_{i,\es})\right],
     \end{split}
\end{equation}
where we have used the assumption \eqref{eq: assumption G}.
We integrate over the domain $\Omega$. Thanks to the boundary conditions in System~\eqref{epspb}, \ie the density and flux continuity across the interfaces, and the homogeneous Dirichlet conditions on $\partial \Omega$, we gain 
\begin{align*}
  \sum_{i=1}^3&\int_{\Omega_{i,\es}} \mu_{i,\es} \Delta (u_H^{\gamma+1}-u_{i,\es}^{\gamma+1})_{-} 
  \\[0.3em]
  &=\sum_{i=1}^2 \int_{\Gamma_{i,i+1,\es}} \left[\mu_i \nabla (u_H^{\gamma+1}-u_{i,\es}^{\gamma+1})_- - \mu_{i+1,\es}  \nabla (u_H^{\gamma+1}-u_{i+1,\es}^{\gamma+1})_-\right]\cdot \boldsymbol{n}_{i,i+1} \\[0.3em]
  &=\sum_{i=1}^2 \left[\int_{\Gamma_{i,i+1,\es}\cap \{u_H<u_{i,\es}\}} \mu_i \nabla u_{i,\es}^{\gamma+1}\cdot \boldsymbol{n}_{i,i+1}  - \int_{\Gamma_{i,i+1,\es}\cap \{u_H<u_{i+1,\es}\}} \mu_{i+1,\es}  \nabla u_{i+1,\es}^{\gamma+1} \cdot \boldsymbol{n}_{i,i+1}\right] \\[0.3em]
  &= \sum_{i=1}^2 \int_{\Gamma_{i,i+1,\es}\cap \{u_H<u_{i,\es}\}} \left[ \mu_i \nabla u_{i,\es}^{\gamma+1}- \mu_{i+1,\es}  \nabla u_{i+1,\es}^{\gamma+1} \right]\cdot \boldsymbol{n}_{i,i+1} \\[0.3em]
  &=0.
\end{align*}
Hence, from Equation~\eqref{eq: sign -}, we find
\begin{equation*}
    \frac{d}{dt}\sum_{i=1}^3 \int_{\Omega_{i,\es}}(u_H-u_{i,\es})_{-} \leq G_M \sum_{i=1}^3 \int_{\Omega_{i,\es}}(u_H-u_{i,\es})_{-}.
\end{equation*}
Finally, Gronwall's lemma and hypothesis~\eqref{eq: assumptions initial data} on $u_{i,\es}^0$ imply 
\begin{equation*}
   \sum_{i=1}^3 \int_{\Omega_{i,\es}}(u_H-u_{i,\es})_{-} \leq e^{G_M t} \sum_{i=1}^3\int_{\Omega_{i,\es}}(u_H-u^0_{i,\es})_{-} =0.
\end{equation*}
We then conclude the boundedness of $u_{i,\es}$ by $u_H$ for all $i=1,2,3$. From the relation $p_{\es}=u^\gamma_{\es}$, we conclude the boundedness of $p_\es$.

{\color{black} By arguing in an analogous way, replacing $u_H$ by $0$ and multiplying by $\sign_+(u_{i,\es})$, we obtain
$$ \sum_{i=1}^3 \int_{\Omega_{i,\es}}(u_{i,\es})_{-} \leq e^{G_M t} \sum_{i=1}^3\int_{\Omega_{i,\es}}(u^0_{i,\es})_{-} =0,$$
namely, $u_\es\geq 0$, and consequently, $p_\es\geq 0.$}

\CORR{\noindent{\bf(ii)}} $\boldsymbol{\partial_t u_\es, \partial_t p_\es \in L^\infty(0,T; L^1(\Omega)).}$ \

We derive Equation~\eqref{eq: u2} with respect to time to obtain
\begin{equation*}
    \partial_t (\partial_t u_{i,\es}) = \mu_{i,\es} \gamma \Delta\prt*{p_{i,\es} \partial_t u_{i,\es}} + \partial_t u_{i,\es} G(p_{i,\es}) + u_{i,\es} G'(p_{i,\es}) \partial_t p_{i,\es}.
\end{equation*}
Upon multiplying by $\sign(\partial_t u_{i,\es})$ and using Kato's inequality, we have
\begin{equation*}
    \partial_t (|\partial_t u_{i,\es}|) \le \mu_{i,\es} \gamma \Delta\prt*{p_{i,\es} |\partial_t u_{i,\es}|} + |\partial_t u_{i,\es}| G(p_{i,\es}) + u_{i,\es} G'(p_{i,\es}) |\partial_t p_{i,\es}|,
\end{equation*}
since $u_{i,\es}$ and $p_{i,\es}$ are both nonnegative and $\partial_t p_{i,\es}= \gamma u_{i,\es}^{\gamma-1}\partial_t u_{i,\es}$.
We integrate over $\Omega_{i,\es}$ and we sum over $i=1,2,3$, namely
\begin{equation}\label{eq: dt dtu}
    \ddt \sum_{i=1}^3  \int_{\Omega_{i,\es}} |\partial_t u_{i,\es}| \le \gamma \underbrace{\sum_{i=1}^3 \mu_{i,\es} \int_{\Omega_{i,\es}} \Delta\prt*{p_{i,\es} |\partial_t u_{i,\es}|}}_{\curlyJ} + G_M \int_{\Omega_{i,\es}} |\partial_t u_{i,\es}|,
\end{equation}
where we use that $G'\leq 0$.

Now we show that the term $\curlyJ$ vanishes.
Integration by parts yields
\begin{equation*}
    \curlyJ = \sum_{i=1}^2 \int_{\Gamma_{i,i+1,\es}} \mu_{i,\es} \nabla(p_{i,\es} |\partial_t u_{i,\es}|)\cdot \boldsymbol{n}_{i,i+1} + \sum_{i=1}^2 \int_{\Gamma_{i,i+1,\es}} \mu_{i+1,\es}\nabla(p_{i+1,\es} |\partial_t u_{i+1,\es}|)\cdot\boldsymbol{n}_{i+1,i}.
\end{equation*}
For the sake of simplicity, we denote $\boldsymbol{n} := \boldsymbol{n}_{i,i+1}.$ Let us recall that, by definition, $\boldsymbol{n}_{i+1,i}= -\boldsymbol{n}$.
We have
\begin{align*}
    \curlyJ = &\sum_{i=1}^2 \int_{\Gamma_{i,i+1,\es}} \prt*{\mu_{i,\es} \nabla(p_{i,\es} |\partial_t u_{i,\es}|) - \mu_{i+1,\es} \nabla(p_{i+1,\es} |\partial_t u_{i+1,\es}|)}\cdot\boldsymbol{n}\\
    = &\underbrace{\sum_{i=1}^2 \int_{\Gamma_{i,i+1,\es}} |\partial_t u_{i,\es}| \mu_{i,\es} \nabla p_{i,\es} \cdot \boldsymbol{n} - |\partial_t u_{i+1,\es}| \mu_{i+1,\es} \nabla p_{i+1,\es}\cdot\boldsymbol{n}}_{\curlyJ_1}\\
    &\; +\underbrace{\sum_{i=1}^2 \int_{\Gamma_{i,i+1,\es}} \mu_{i,\es} p_{i,\es}\nabla |\partial_t u_{i,\es}| \cdot \boldsymbol{n} - \mu_{i+1,\es} p_{i+1,\es} \nabla|\partial_t u_{i+1,\es}| \cdot \boldsymbol{n} }_{\curlyJ_2}.
\end{align*}
Let us recall the membrane conditions of Problem~\eqref{epspb}, namely
\begin{align}
		\mu_{i,\es} u_{i,\es} \nabla p_{i,\es}\cdot \boldsymbol{n} &=  \mu_{i+1,\es} u_{i+1,\es} \nabla p_{i+1,\es}\cdot \boldsymbol{n},  \label{cond: 1} \\[0.2em]
		u_{i,\es} &= u_{i+1,\es}, \label{cond: 2}
	\end{align}
on $ (0,T) \times\Gamma_{i,i+1,\es}$, for  $i = 1,2$.
From Equation~\eqref{cond: 2}, it is immediate to infer
\begin{equation}\label{bdry 0}
    \partial_t u_{i,\es} = \partial_t u_{i+1,\es}, \text{ on }(0,T)\times \Gamma_{i,i+1,\es},
\end{equation}
since
\begin{equation*}
    u_{i,\es}(t+h) - u_{i,\es}(t) = u_{i+1,\es}(t+h) - u_{i+1,\es}(t), 
\end{equation*}
on $\Gamma_{i,i+1,\es}$ for all $h>0$ such that $t+h \in (0,T)$.

Combing Equation~\eqref{cond: 2} and Equation~\eqref{cond: 1} we get
\begin{equation}\label{bdry 1}
     \mu_{i,\es} \nabla p_{i,\es} \cdot \boldsymbol{n} = \mu_{i+1,\es} \nabla p_{i+1,\es} \cdot \boldsymbol{n} \quad \text{ on } (0,T)\times \Gamma_{i,i+1,\es}.
\end{equation}
Moreover, Equation~\eqref{cond: 1} also implies 
\begin{align}
    \label{bdry 2} 
    \mu_{i,\es} p_{i,\es} \nabla u_{i,\es} \cdot \boldsymbol{n} = \mu_{i+1,\es} p_{i+1,\es} \nabla u_{i+1,\es} \cdot \boldsymbol{n} \quad \text{ on }(0,T)\times \Gamma_{i,i+1,\es},
\end{align}
which, combined with Equation~\eqref{cond: 2} gives also
\begin{align}
     \mu_{i,\es} \nabla u_{i,\es} \cdot \boldsymbol{n} = \mu_{i+1,\es} \nabla u_{i+1,\es} \cdot \boldsymbol{n} \quad \text{ on } (0,T)\times \Gamma_{i,i+1,\es}.
     \label{bdry 3}
\end{align}
Now we may come back to the computation of the term $\curlyJ$. By Equations~\eqref{bdry 0}, and \eqref{bdry 1} we directly infer that $\curlyJ_1$ vanishes.

We rewrite the term $\curlyJ_2$ as 
\begin{align*}
    &\sum_{i=1}^2 \int_{\Gamma_{i,i+1,\es}} \mu_{i,\es} p_{i,\es}  \ \sign(\partial_t u_{i,\es}) \ \partial_ t\prt*{\nabla u_{i,\es} \cdot \boldsymbol{n}} - \mu_{i+1,\es} p_{i+1,\es} \ \sign(\partial_t u_{i+1,\es}) \ \partial_t \prt*{\nabla u_{i+1,\es} \cdot \boldsymbol{n} }\\
    = & \underbrace{\sum_{i=1}^2 \int_{\Gamma_{i,i+1,\es}} \sign(\partial_t u_{i,\es}) \ \partial_t \prt*{\mu_{i,\es} p_{i,\es} \nabla u_{i,\es} \cdot \boldsymbol{n}  - \mu_{i+1,\es} p_{i+1,\es} \nabla u_{i+1,\es} \cdot \boldsymbol{n} }}_{\curlyJ_{2,1}}\\
    & - \underbrace{\sum_{i=1}^2 \int_{\Gamma_{i,i+1,\es}} \abs{\partial_t p_{i, \es}} \prt*{\mu_{i,\es} \nabla u_{i,\es} \cdot \boldsymbol{n} - \mu_{i+1,\es} \nabla u_{i+1,\es} \cdot \boldsymbol{n}}}_{\curlyJ_{2,2}} ,
\end{align*}
where we used Equation~\eqref{bdry 0}, which also implies $\partial_t p_{i,\es} = \partial_t p_{i+1,\es} $ on $(0,T) \times \Gamma_{i,i+1,\es}$, for  $i = 1,2$.
The terms $\curlyJ_{2,1}$ and  $\curlyJ_{2,2}$ vanish thanks to Equation~\eqref{bdry 2} and Equation~\eqref{bdry 3}, respectively.

Hence, from Equation~\eqref{eq: dt dtu}, we finally have
\begin{equation*}
      \ddt \sum_{i=1}^3  \int_{\Omega_{i,\es}} |\partial_t u_{i,\es}| \le G_M \sum_{i=1}^3  \int_{\Omega_{i,\es}} |\partial_t u_{i,\es}|,
\end{equation*}
and, using Gronwall's inequality, we obtain
\begin{equation*}
     \sum_{i=1}^3  \int_{\Omega_{i,\es}} |\partial_t u_{i,\es}(t)| \le e^{G_M t}\sum_{i=1}^3   \int_{\Omega_{i,\es}} |\prt*{\partial_t u_{i,\es}}^0|.
\end{equation*}
Thanks to the assumptions on the initial data, \cf Equation~\eqref{eq: assumptions initial data 2}, we conclude.

\CORR{\noindent{\bf(iii)}} $\boldsymbol{ p_\es \in L^2(0,T; H^1(\Omega\setminus \Omega_{2,\es})).}$ As known, in the context of a filtration equation, we can recover the pressure equation upon multiplying the equation on $u_{i,\es}$, \cf System~\eqref{epspb}, by $p'(u_{i,\es})=\gamma u_{i,\es}^{\gamma-1}$. Therefore, we obtain
\begin{equation}\label{eqp}
	\partial_t p_{i,\es} - \gamma \mu_{i,\es} p_{i,\es} \Delta p_{i,\es} = \mu_{i,\es} |\nabla p_{i,\es}|^2 + \gamma p_{i,\es} G(p_{i,\es}).
\end{equation}
Studying the equation on $p_\es$ rather than the equation on $u_\es$ turns out to be very useful in order to prove compactness, since, as it is well-know for the porous medium equation (PME), the gradient of the pressure can be easily bounded in $L^2$, while the density solution of the PME can develop jump singularities on the free boundary, \cite{vasquez}.

We integrate Equation~\eqref{eqp} on each $\Omega_{i,\es}$, and we sum over all $i$ to obtain
\begin{equation}\label{sump}
	\sum_{i=1}^{3} \int_{\Omega_{i,\es}}\partial_t p_{i,\es} = \sum_{i=1}^{3} \left(\gamma \mu_{i,\es} \int_{\Omega_i,\es} p_{i,\es}\Delta p_{i,\es} +\int_{\Omega_{i,\es}} \mu_{i,\es}|\nabla p_{i,\es}|^2 +\gamma\int_{\Omega_{i,\es}} p_{i,\es} G(p_{i,\es})\right).
\end{equation}
Integration by parts yields 
\begin{align*}
	\sum_{i=1}^{3} \mu_{i,\es} \int_{\Omega_{i,\es}} p_{i,\es}\Delta p_{i,\es} = &-\sum_{i=1}^{3} \mu_{i,\es} \int_{\Omega_{i,\es}} |\nabla p_{i,\es}|^2 + \sum_{i=1}^2 \int_{\Gamma_{i,i+1,\es}} \mu_{i,\es} p_{i,\es} \nabla p_{i,\es} \cdot \boldsymbol{n}_{i,i+1} \\
	&\qquad+\sum_{i=1}^2 \int_{\Gamma_{i,i+1,\es}} \mu_{i+1,\es} p_{i+1,\es} \nabla p_{i+1,\es} \cdot \boldsymbol{n}_{i+1,i} \\
	= &-\sum_{i=1}^{3}\mu_{i,\es} \int_{\Omega_{i,\es}} |\nabla p_{i,\es}|^2,
\end{align*}
since we have homogeneous Dirichlet boundary conditions on $\partial\Omega$ and the flux continuity conditions~\eqref{bdry 1}.
 
Hence, from Equation~\eqref{sump}, we have
\begin{equation}\label{sump2}
	\sum_{i=1}^{3} \int_{\Omega_{i,\es}}\partial_t p_{i,\es} = \sum_{i=1}^{3} \mu_{i,\es}\left((1-\gamma)\int_{\Omega_{i,\es}} |\nabla p_{i,\es}|^2 +\gamma\int_{\Omega_{i,\es}} p_{i,\es} G(p_{i,\es})\right).
\end{equation}
We integrate over time and we deduce that
\begin{equation}\label{sump2t}
	\sum_{i=1}^{3} \left( \int_{\Omega_{i,\es}} p_{i,\es}(T) - \int_{\Omega_{i,\es} }
	p_{i,\es}^0 + \mu_{i,\es} (\gamma-1) \int_{0}^{T}\int_{\Omega_{i,\es}} |\nabla p_{i,\es}|^2\right) = \sum_{i=1}^{3} \gamma \int_{0}^{T}\int_{\Omega_{i,\es}} p_{i,\es} G(p_{i,\es}).
\end{equation}
Finally, we conclude that
\begin{equation}\label{gradp}
	\sum_{i=1}^{3} \int_{0}^{T}\int_{\Omega_{i,\es}}\mu_{i,\es} |\nabla p_{i,\es}|^2 \le \sum_{i=1}^{3} \frac{\gamma}{\gamma-1} \int_{0}^{T}\int_{\Omega_{i,\es}} p_{i,\es} G(p_{i,\es}) + \frac{1}{\gamma-1} \int_{\Omega_{i,\es}} p_{i,\es}^0,
\end{equation}
Since we have already proved that $p_{i,\es}$ is bounded in $L^\infty(\Omega_T)$ and by assumption $G$ is continuous, we finally find that
\begin{equation}\label{eq:bound-nablap}
	\,\sum_{i=1}^{3} \mu_{i,\es} \int_{0}^{T}\int_{\Omega_{i,\es}} |\nabla p_{i,\es}|^2 \le C,	
\end{equation}
where $C$ denotes a constant independent of $\es$.
Since both $\mu_{1,\es}$ and $\mu_{3,\es}$ are bounded from below away from zero, we conclude that the uniform bound holds in $\Omega\setminus\Omega_{2,\es}$.

\end{proof}

\begin{remark}
Let us also notice that, differently from \cite{sanchez-palencia}, where the author studies the linear and uniformly parabolic case, proving weak compactness is not enough. Indeed, due to the presence of the nonlinear term $u \nabla p$, it is necessary to infer strong compactness of $u$. For this reason, the $L^1$-uniform estimate on the time derivative proven in Lemma~\ref{lemma: a priori} is fundamental.
\end{remark}

%%%%%%%%%%%%%%%%%%%%%%%%5
\section{Limit \texorpdfstring{$\es\rightarrow 0$}{}}\label{sec: limit}
We have now the {\it a priori} tools to face the limit $\es \rightarrow 0$. We need to construct an extension operator with the aim of controlling uniformly, with respect to $\es$, the pressure gradient in $L^2(\Omega)$. Indeed, from \eqref{eq:bound-nablap}, we see that one cannot find a uniform bound for $\norm{\nabla p_{2,\es} }_{L^2(\Omega_{2,\es})}$. The blow-up of Estimate~\eqref{eq:bound-nablap} for $i=2$, is in fact the main challenge in order to find compactness on $\Omega$. To this end, following \cite{sanchez-palencia}, we introduce in Subsection~\ref{subsec: extension} an extension operator which projects the points of $\Omega_{2,\es}$ inside $\Omega_{1,\es} \cup \Omega_{3,\es}$. Then, introducing proper test functions such that the variational formulation for $\es>0$ in \eqref{eq:variational-pb-homo} and $\es \rightarrow 0$ in \eqref{eq: variational limit problem} are well-defined, we can pass to the limit (Subsection~\ref{subsec: testfnc}).

\subsection{Extension operator and compactness}
\label{subsec: extension}
\begin{figure}[H]
    \centering
    \includegraphics[width=9cm]{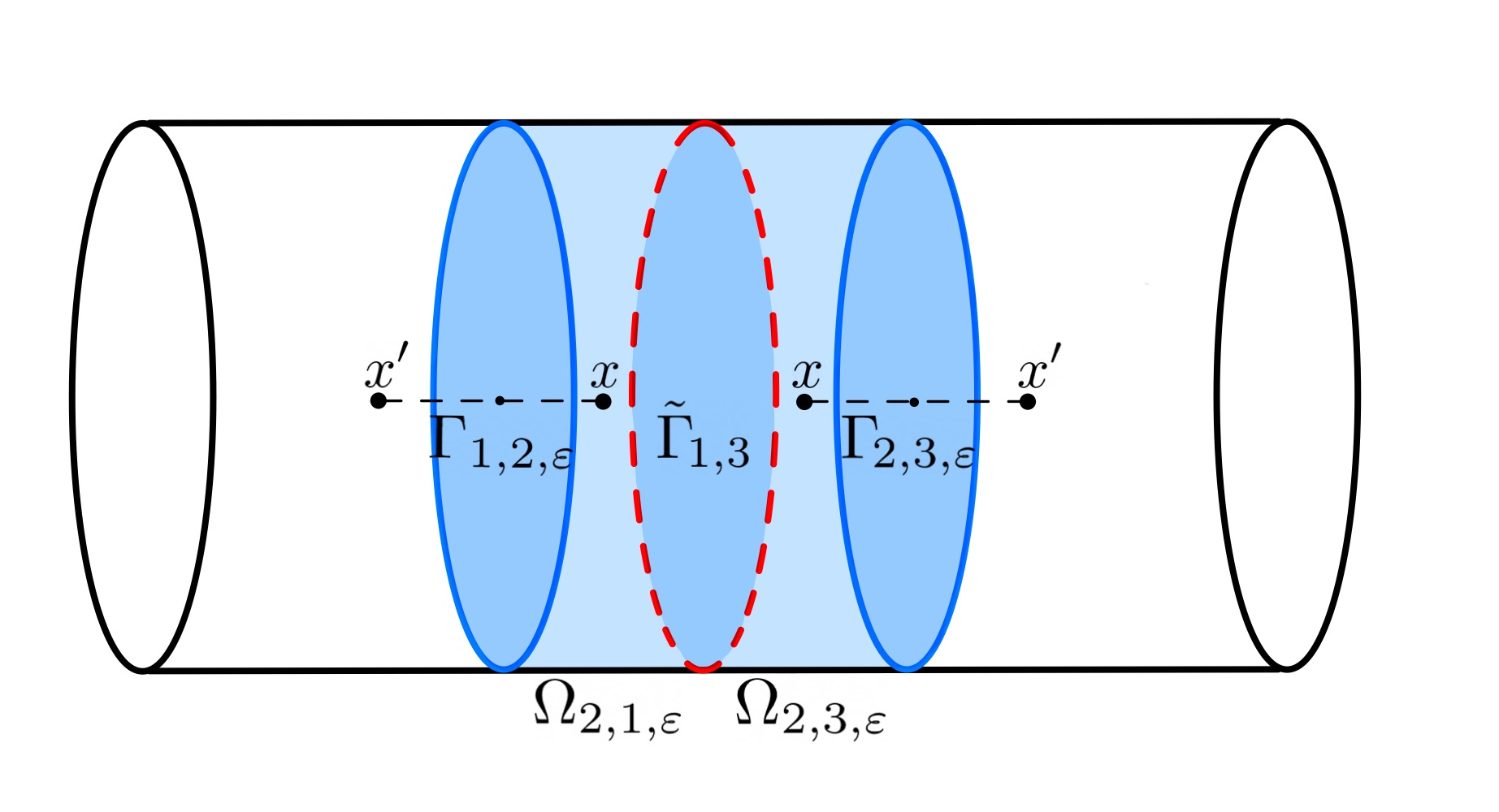} 
    \caption{Representation of the spatial symmetry used in the definition of the extension operator, \cf Equation~\eqref{eq: extension operator} and of the two subdomains of $\Omega_{2,1,\es}$ and $\Omega_{2,3,\es}$.}
    \label{fig: extension}
\end{figure}

As mentioned above, in order to be able to pass to the limit $\es \to 0$, we first need to define the following extension operator 
\begin{equation*}
    \curlyP_\es: L^q(0,T; W^{1,p}(\Omega\setminus \Omega_{2,\es})) \to L^q(0,T; W^{1,p}(\Omega \setminus \tilde\Gamma_{1,3} )), \quad \text{for} \quad 1\le p,q\le +\infty,
\end{equation*}
as follows for a general function $z \in  L^q(0,T; W^{1,p}(\Omega\setminus \Omega_{2,\es})) $, 
\begin{equation}\label{eq: extension operator}
    \curlyP_\es(z(t,x)) = \begin{cases}
        z(t,x),\quad &\text{if} \quad x\in {\Omega_{1,\es} \cup \Omega_{3,\es}},\\
       z(t,x'),\quad &\text{if} \quad x\in {\Omega_{2,\es}},
    \end{cases}
\end{equation}
where $x'$ is the symmetric of $x$ with respect to $\Gamma_{1,2,\es}$ (or $\Gamma_{2,3,\es}$) if $x \in \Omega_{2,1,\es}$ (respectively $x\in \Omega_{2,3,\es}$), \CORR{defined by the function $g:x \to x'$ for $x = (x_1,x_2,x_3)\in \Omega_{2,\es}$ such that 
\commentout{\[
    g(x) = \begin{cases}
        \left(-2d(\Gamma_{1,3,\epsilon}, x), x_2,x_3\right), \quad &\text{if} \quad x\in \Omega_{2,1,\epsilon},\\
        \left(2d(\Gamma_{2,3,\epsilon}, x), x_2,x_3\right), \quad  &\text{if} \quad x\in \Omega_{2,3,\epsilon},
    \end{cases}
\]}
\[
   g(x) = \begin{cases}
        \left(x_1,x_2,x_3-2\, d(\Gamma_{1,3,\es}, x)\right), \quad &\text{if} \quad x\in \Omega_{2,1,\es},\\
        \left(x_1,x_2,x_3+2\, d(\Gamma_{2,3,\es}, x) \right), \quad  &\text{if} \quad x\in \Omega_{2,3,\es},
    \end{cases}
\]
where $d(\Gamma_{1,2,\es}, x)$ (respectively $d(\Gamma_{2,3,\es}, x)$) denotes the distance between $x$ and the surface $\Gamma_{1,2,\es}$ (respectively $\Gamma_{2,3,\es}$).
The point $x'$ is illustrated in Figure~\ref{fig: extension}.} It can be easily seen that the function $g$ and its inverse have uniformly bounded first derivatives. Hence, we infer that $\curlyP_\es$ is linear and bounded, \ie
\[
    \norm{\mathcal{P}_\es(z)}_{L^q(0,T;W^{1,p}(\Omega \setminus \tilde\Gamma_{1,3}))} \le C,\quad  \forall z\in L^q(0,T; W^{1,p}(\Omega\setminus \Omega_{2,\es}))\CORR{, \mbox{ for } 1\le p,q\le \infty}.
\]
\commentout{Since the previous bound is uniform in $\es$, we obtain that $\curlyP_\es$ remains bounded even in the limit $\es \to 0$.}  
Let us notice that the extension operator is well defined also from $L^1((0,T)\times (\Omega \setminus \Omega_{2,\es}))$ into $L^1((0,T)\times ( \Omega \setminus \tilde\Gamma_{1,3}))$. Hence, we can apply it also on $u_{\es}$ and $\partial_t p_{\es}$.

\begin{remark}\label{rmk: a priori P} Thanks to the properties of the extension operator, the estimates stated in Lemma~\ref{lemma: a priori} hold true also upon applying $\curlyP_\es(\cdot)$ on $p_{\es}, u_\es$, and $\partial_t p_\es$, namely
\begin{align*} 
    &0\le \curlyP_\es(p_{\es})\le p_H, \quad    0\le\curlyP_\es(u_\es) \leq u_H,  \\[0.3em] 
   & \partial_t \curlyP_\es(p_\es) \in L^\infty(0,T; L^1(\Omega\setminus\tilde\Gamma_{1,3})),\\[0.3em]
   &\nabla\curlyP_\es(p_\es) \in L^2(0,T; L^2(\Omega\setminus\tilde\Gamma_{1,3})),\\[0.3em]
   &{\color{black}\frac{\gamma}{\gamma+1}  \nabla \prt*{ \curlyP_\es(u_\es^{\gamma+1})}\in L^2(0,T;L^2(\Omega\setminus\tilde\Gamma_{1,3}))},\\[0.3em
  ]
 & {\color{black}\partial_t(\curlyP_\es(u_\es^{\gamma+1}))\in L^\infty(0,T; L^1(\Omega\setminus \tilde\Gamma_{1,3})).} 
\end{align*}

{\noindent\color{black}The last two bounds hold thanks to the following arguments 
    \begin{equation*}
      \frac{\gamma}{\gamma+1}  \nabla \prt*{ \curlyP_\es(u_\es^{\gamma+1})} = \curlyP_\es(u_\es) \nabla\curlyP_\es(p_{\es})\in L^2(0,T;L^2(\Omega\setminus\tilde\Gamma_{1,3})),
    \end{equation*}
and
\begin{equation*}
   \partial_t\prt*{ \curlyP_\es(u_\es^{\gamma+1})}=(\gamma+1)\curlyP_\es(p_\es)\partial_t \curlyP_\es( u_\es)=(\gamma+1)\curlyP_\es(p_\es)\curlyP_\es(\partial_t  u_\es) \in L^\infty(0,T; L^1(\Omega\setminus \tilde\Gamma_{1,3}).
\end{equation*}}
\end{remark}

\begin{lemma}[Compactness of the extension operator]\label{lem: convergence P}
\begin{sloppypar}
 Let $(u_\es, p_\es)$ be the solution of Problem~\eqref{epspb}. There exists a couple $(\tilde{u},\tilde{p})$ with 
 \[
 \tilde{u}\in L^\infty(0,T; L^\infty(\Omega \setminus \tilde \Gamma_{1,3})), \quad{\tilde{p}\in L^2(0,T; H^1(\Omega \setminus \tilde \Gamma_{1,3})) \cap L^\infty(0,T; L^\infty(\Omega \setminus \tilde \Gamma_{1,3}))},
 \]
 such that, up to a subsequence, it holds  
\end{sloppypar}
 \begin{itemize}
   \item[\textit{(i)}] $\curlyP_\es(p_\es)\rightarrow  \tilde{p}$ strongly in $L^p(0,T; L^p(\Omega \setminus \tilde \Gamma_{1,3}))$, for $1\le p < +\infty$,
    \item[\textit{(ii)}] $\curlyP_\es(u_\es)\rightarrow  \tilde{u}$ strongly in $L^p(0,T; L^p(\Omega \setminus \tilde \Gamma_{1,3}))$, for $1\le p < +\infty$,
     \item[\textit{(iii)}] $\nabla\curlyP_\es(p_\es)\rightharpoonup  \nabla\tilde{p}$ weakly in $L^2(0,T; L^2(\Omega\setminus\tilde \Gamma_{1,3}))$.
 \end{itemize}
\end{lemma}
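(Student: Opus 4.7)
The plan is to combine the uniform bounds collected in Remark~\ref{rmk: a priori P} with an Aubin–Lions type compactness argument in order to extract strongly convergent subsequences on the reflected domain, and then to identify the gradient limit by duality. Because the extension $\curlyP_\es$ is linear and bounded from $L^q(0,T;W^{1,p}(\Omega\setminus\Omega_{2,\es}))$ into $L^q(0,T;W^{1,p}(\Omega\setminus\tilde\Gamma_{1,3}))$ with norm independent of $\es$, all the estimates of Lemma~\ref{lemma: a priori} transfer to $\curlyP_\es(p_\es)$, $\curlyP_\es(u_\es)$ and their time derivatives, so I work directly on the fixed limit domain $\Omega\setminus\tilde\Gamma_{1,3}$.

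First I would establish \textit{(iii)}. The $L^2$–bound $\nabla\curlyP_\es(p_\es)\in L^2(0,T;L^2(\Omega\setminus\tilde\Gamma_{1,3}))$ from Remark~\ref{rmk: a priori P}, together with the $L^\infty$–bound on $\curlyP_\es(p_\es)$, gives boundedness in $L^2(0,T;H^1(\Omega\setminus\tilde\Gamma_{1,3}))$. By the Banach–Alaoglu theorem, there exists $\tilde{p}\in L^2(0,T;H^1(\Omega\setminus\tilde\Gamma_{1,3}))\cap L^\infty(0,T;L^\infty(\Omega\setminus\tilde\Gamma_{1,3}))$ and some $v\in L^2(0,T;L^2)$ such that, up to a subsequence, $\curlyP_\es(p_\es)\rightharpoonup \tilde{p}$ weakly in $L^2(0,T;H^1)$ and $\nabla\curlyP_\es(p_\es)\rightharpoonup v$ weakly in $L^2(0,T;L^2)$. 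Once strong convergence of $\curlyP_\es(p_\es)$ in $L^2(0,T;L^2)$ is obtained (next paragraph), the distributional identity $\langle v,\varphi\rangle=-\langle\tilde{p},\nabla\varphi\rangle$ for every $\varphi\in C_c^\infty((0,T)\times(\Omega\setminus\tilde\Gamma_{1,3}))$ forces $v=\nabla\tilde{p}$, proving \textit{(iii)}.

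For \textit{(i)} I would apply the Aubin–Lions–Simon compactness lemma with the Gelfand triple
\begin{equation*}
H^1(\Omega\setminus\tilde\Gamma_{1,3})\;\hookrightarrow\hookrightarrow\; L^2(\Omega\setminus\tilde\Gamma_{1,3})\;\hookrightarrow\; L^1(\Omega\setminus\tilde\Gamma_{1,3}),
\end{equation*}
the first embedding being compact by Rellich–Kondrachov on each connected component of $\Omega\setminus\tilde\Gamma_{1,3}$. Remark~\ref{rmk: a priori P} supplies the bound of $\curlyP_\es(p_\es)$ in $L^2(0,T;H^1)$ and of $\partial_t\curlyP_\es(p_\es)$ in $L^\infty(0,T;L^1)\hookrightarrow L^1(0,T;L^1)$, which is exactly the hypothesis of the $L^1$–time–derivative version of Simon's theorem. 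This yields precompactness of $\{\curlyP_\es(p_\es)\}$ in $L^2(0,T;L^2(\Omega\setminus\tilde\Gamma_{1,3}))$, hence, up to a subsequence, $\curlyP_\es(p_\es)\to\tilde{p}$ strongly in $L^2(0,T;L^2)$. The uniform $L^\infty$–bound then upgrades this convergence to strong convergence in $L^p(0,T;L^p(\Omega\setminus\tilde\Gamma_{1,3}))$ for every $1\le p<\infty$ by a standard interpolation and dominated–convergence argument, giving \textit{(i)}.

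Finally, for \textit{(ii)}, I would exploit the pointwise relation $\curlyP_\es(u_\es)=\curlyP_\es(p_\es)^{1/\gamma}$, which follows because on the membrane both extensions reflect through the same map $x\mapsto x'$ and on $\Omega_{i,\es}$ one has $u_{i,\es}=p_{i,\es}^{1/\gamma}$. Strong convergence in $L^p$ of $\curlyP_\es(p_\es)$ together with the uniform $L^\infty$ bound and the continuity of $s\mapsto s^{1/\gamma}$ on $[0,p_H]$ yield, by dominated convergence applied along an a.e. convergent subsequence, $\curlyP_\es(u_\es)\to\tilde{p}^{1/\gamma}$ strongly in $L^p(0,T;L^p)$. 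Setting $\tilde{u}:=\tilde{p}^{1/\gamma}$ concludes the proof. The main obstacle I anticipate is justifying Aubin–Lions with only an $L^1$ control on $\partial_t \curlyP_\es(p_\es)$: in dimension three $L^1(\Omega)$ does not embed into $H^{-1}(\Omega)$, so one cannot invoke the classical Hilbert–space version, and must instead rely on Simon's generalisation to Bochner spaces with $L^1$ time derivative, using the continuous injection $L^2\hookrightarrow L^1$ as the weaker target space.
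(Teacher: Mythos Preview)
Your proof is correct and follows essentially the same strategy as the paper: uniform bounds from Remark~\ref{rmk: a priori P} feed a compactness argument for $\curlyP_\es(p_\es)$, the $L^\infty$ bound upgrades to all $L^p$, the relation $u_\es=p_\es^{1/\gamma}$ transfers convergence to $\curlyP_\es(u_\es)$, and weak compactness of the gradient follows from its $L^2$ bound. The only cosmetic difference is that the paper invokes the compact embedding $W^{1,1}\hookrightarrow\hookrightarrow L^1$ on the space--time cylinder (using $\partial_t\curlyP_\es(p_\es),\nabla\curlyP_\es(p_\es)\in L^1$) rather than Aubin--Lions--Simon with the triple $H^1\hookrightarrow\hookrightarrow L^2\hookrightarrow L^1$, and is less explicit than you are about identifying the weak gradient limit with $\nabla\tilde p$.
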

\begin{proof}
\textit{(i).} Since both $\partial_t \curlyP_\es(p_\es)$ and $\nabla \curlyP_\es(p_\es)$ are bounded in $L^1(0,T; L^1(\Omega \setminus \tilde \Gamma_{1,3}))$ uniformly with respect to $\es$, we infer the strong compactness of $\curlyP_\es(p_\es)$ in  $L^1(0,T; L^1(\Omega \setminus \tilde \Gamma_{1,3}))$. Let us also notice that since both $u_\es$ and $p_\es$ are uniformly bounded in $L^\infty(0,T; L^\infty(\Omega \setminus  \tilde \Gamma_{1,3}))$ then the strong convergence holds in any $L^p(0,T; L^p(\Omega \setminus \tilde \Gamma_{1,3}))$ with $1\le p < \infty.$

\textit{(ii).} From \textit{(i)}, we can extract a subsequence of $\curlyP_\es(p_\es)$ which converges almost everywhere. Then, remembering that $u_\es=p_\es^{1/\gamma}$, with $\gamma>1$ fixed, we have convergence of $\curlyP_\es(u_\es)$ almost everywhere. Thanks to the uniform $L^\infty$-bound of $\curlyP_\es(u_\es)$, Lebesgue's theorem implies the statement. Let us point out that, in particular, the $L^\infty$-uniform bound is also valid in the limit.

\textit{(iii).} The uniform boundedness of $ \nabla\curlyP_\es(p_\es)$ in $L^2(0,T; L^2(\Omega \setminus \tilde \Gamma_{1,3}))$ immediately implies weak convergence up to a subsequence. 

\end{proof}

\subsection{Test function space and passage to the limit \texorpdfstring{$\es\rightarrow 0$}{}}\label{subsec: testfnc}
Since in the limit we expect a discontinuity of the density on $\tilde\Gamma_{1,3}$, we need to define a suitable space of test functions. Therefore we construct the space $E^\star$ as follows.
Let us consider a function $\zeta \in \mathcal{D}(\Omega)$ (\ie $C^\infty_c(\Omega)$). 
For any $\es>0$ small enough, we build the function $v_\es = \curlyP_\es(\zeta)$, using the extension operator previously defined. The space of all linear combinations of these functions $v_\es$ is called $E^\star\subset H^1(\Omega \setminus \tilde{\Gamma}_{1,3})$\CORR{, namely
\[
E^\star=\left\{\sum_{n=1}^\infty c_n v_{\es,n}\; \mbox{ s.t. }\; c_n\in \R,\; v_{\es,n}=\curlyP_\es(\zeta_n),\; \zeta_n \in C^\infty_c(\Omega)\right\}.
\]
} 
We stress that the functions of $E^\star$ are discontinuous on $\tilde \Gamma_{1,3}$.

In the weak formulation of the limit problem~\eqref{eq: variational limit problem}, we will make use of piece-wise $C^\infty$-test functions (discontinuous on $\tilde\Gamma_{1,3}$) of the type $w(t,x)=\varphi(t)v(x)$, where $\varphi\in C^1([0,T))$ with $\varphi(T) = 0$ and $v\in E^*$. Therefore, $w$ belongs to $C^1([0,T);E^*)$.
On the other hand, in the variational formulation \eqref{eq:variational-pb-homo}, \ie for $\es>0$, $H^1(0,T; H^1_0(\Omega))$ test functions are required. Thus, in order to study the limit $\es\to 0$, we need to introduce a proper sequence of test functions depending on $\es$ that converges to $w$. 
To this end, we define the operator $L_\es: C^1([0,T); E^*) \to H^1(0,T; H_0^1(\Omega))$ such that
\[
    L_\es(w) \to w, \quad \text{uniformly as} \quad \es \to 0,  \quad \forall w \in C^1([0,T); E^\star).
\]
In this way, $L_\es(w)$ belongs to $H^1(0,T; H^1_0(\Omega))$, therefore, it can be used as test function in the formulation~\eqref{eq:variational-pb-homo}.

Following Sanchez-Palancia, \cite{sanchez-palencia}, for all $t\in[0,T]$ and $x = (x_1,x_2,x_3) \in \Omega$,  we define
\[
    L_\es(w(t,x)) = \begin{dcases}
        w(t,x),\qquad &\text{if}\quad x \notin \Omega_{2,\es},\\[0.3em]
        \frac12\left[w\prt*{t,x_1,x_2,\frac \es 2 } + w\prt*{t,x_1,x_2, - \frac \es2} \right] \\[0.3em]
        \quad +\left[w\prt*{t,x_1,x_2,\frac \es 2} - w\prt*{t,x_1,x_2, - \frac\es 2} \right]\frac{x_3}{\es}, \qquad &\text{otherwise}.
     \end{dcases}
\]
It can be easily verified that \CORR{$L_\es(w)$} is linear with respect to $x_3$ in $\Omega_{2,\es}$ and is continuous on $\partial \Omega_{2,\es}$. Let us notice that it holds
\begin{equation}\label{eq: dL dx3}
    \left|\frac{\partial L_\es(w)}{\partial x_3} \right|\le \frac C\es.
\end{equation} 
Furthermore, thanks to the mean value theorem, the partial derivatives of \CORR{$L_\es(w)$} with respect to $x_1$ and $x_2$ are bounded by a constant (independent of $\es$), 
\begin{equation*}
 \left|\frac{\partial L_\es(w)}{\partial x_1}\right|\le C,\qquad \left|\frac{\partial L_\es(w)}{\partial x_2}\right|\le C,    
\end{equation*}
and since the measure of $\Omega_{2,\es}$ is proportional to $\es$, we have 
\begin{equation}\label{eq: bound dxL}
     \int_0^T \int_{\Omega_{2,\es}} \left|\frac{\partial L_\es(w)}{\partial x_1}\right|^2 + \left|\frac{\partial L_\es(w)}{\partial x_2}\right|^2 \le C \es.
\end{equation}
Given $w\in C^1([0,T);E^\star)$, we take $L_\es(w)$ as a test function in the variational formulation of the problem, \ie Equation~\eqref{eq:variational-pb-homo}, and we have 
\begin{equation}
\label{eq: test L}
\begin{split}
        -\int_0^T\int_{\Omega} u_\es \partial_t L_\es(w) + \sum_{i=1}^3 \mu_{i,\es}  \int_0^T \int_{\Omega_{i,\es}} &u_{i,\es} \nabla p_{i,\es} \cdot \nabla L_{\es}(w) \\
        &=  \int_0^T\int_{\Omega} u_{\es}G(p_{\es}) L_\es(w)  + \int_{\Omega} u_\es^0 L_\es(w^0).
        \end{split}
\end{equation} 
Thanks to the \textit{a priori} estimates already proven, \cf Lemma~\ref{lemma: a priori}, Remark~\ref{rmk: a priori P} and the convergence result on the extension operator, \cf Lemma~\ref{lem: convergence P}, we are now able to pass to the limit $\es\rightarrow 0$ and recover the effective interface problem.
%%%%%%%%%%%%% main theorem
\begin{thm}\label{thm: existence}
For all test functions of the form $w(t,x):=\varphi(t)v(x)$ with $\varphi \in C^1([0,T))$ and $v\in E^*$, the limit couple $(\tilde{u},\tilde{p})$ of Lemma~\ref{lem: convergence P} satisfies the following equation
\begin{equation*}
\begin{split}
        -\int_0^T\int_{\Omega} \tilde{u} & \partial_t w + \tilde\mu_1\int_0^T\int_{\tilde\Omega_1}  \tilde{u}  \nabla \tilde{p} \cdot \nabla w +  \tilde\mu_3 \int_0^T\int_{\tilde\Omega_3}   \tilde{u}  \nabla \tilde{p} \cdot \nabla w \\
        &+ \tilde\mu_{1,3}\int_0^T\int_{\tilde\Gamma_{1,3}} \llbracket \Pi \rrbracket \prt*{w_{|x_3=0^+}- w_{|x_3=0^-}} = \int_0^T\int_{\Omega} \tilde{u}G(\tilde{p}) w + \int_{\Omega} \tilde{u}^0 w^0,
        \end{split}
\end{equation*}
where $$\llbracket \Pi \rrbracket:=\frac{\gamma}{\gamma+1}(\tilde{u}^{\gamma+1})_{|x_3=0^+}- \frac{\gamma}{\gamma+1}(\tilde{u}^{\gamma+1})_{|x_3=0^-},$$
\CORR{and $(\cdot)_{|x_3=0^-}= \curlyT_1 (\cdot)$ as well as $(\cdot)_{|x_3=0^+}= \curlyT_3 (\cdot)$, with $\curlyT_1, \curlyT_3$ the trace operators defined in Section~\ref{hp}}. \CORRdeux{By definition, this equation is the weak formulation of Problem~\eqref{effectivepb}. }
\end{thm}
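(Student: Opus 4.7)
The plan is to pass to the limit $\es \to 0$ directly in the identity~\eqref{eq: test L}, obtained by using $L_\es(w)$ as a test function. The identity splits into five contributions: a time-derivative term, three bulk-diffusion terms (one per $\Omega_{i,\es}$), a reaction term, and an initial-data term. For the bulk terms with $i=1,3$, I would use $\mu_{i,\es} \to \tilde\mu_i$ together with the fact that $\nabla L_\es(w) = \nabla w$ on $\Omega_{i,\es} \subset \Omega \setminus \Omega_{2,\es}$, plus the key observation that $\curlyP_\es(u_\es) \to \tilde u$ strongly in every $L^q$ (Lemma~\ref{lem: convergence P}(ii)) while $\nabla \curlyP_\es(p_\es) \rightharpoonup \nabla \tilde p$ weakly in $L^2$, so that $u_\es \nabla p_\es \rightharpoonup \tilde u \nabla \tilde p$ weakly in $L^2(\Omega_T)$. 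The time-derivative, reaction, and initial-data terms handle similarly by combining strong $L^p$-compactness of $\curlyP_\es(u_\es), \curlyP_\es(p_\es)$, the uniform bounds on $L_\es(w)$ and $\partial_t L_\es(w)$, and the fact that $|\Omega_{2,\es}| = O(\es)$.

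The principal obstacle is the membrane term $\mu_{2,\es} \int_0^T \int_{\Omega_{2,\es}} u_{2,\es} \nabla p_{2,\es} \cdot \nabla L_\es(w)$, which must supply the jump contribution $\tilde\mu_{1,3} \int_0^T \int_{\tilde\Gamma_{1,3}} \llbracket \Pi \rrbracket (w_{|x_3=0^+} - w_{|x_3=0^-})$. I would split the inner product according to the spatial components. For the tangential part, \eqref{eq: bound dxL} gives $\|\partial_{x_j} L_\es(w)\|_{L^2(\Omega_{2,\es})} = O(\es^{1/2})$ for $j=1,2$, while Lemma~\ref{lemma: a priori}(iv) together with $\mu_{2,\es} \sim \tilde\mu_{1,3} \es$ implies $\|\nabla p_{2,\es}\|_{L^2(\Omega_{2,\es})} = O(\es^{-1/2})$; Cauchy--Schwarz then yields a bound of order $\mu_{2,\es} = O(\es)$, which vanishes. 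For the normal component, the crucial point is that $\partial_{x_3} L_\es(w) = \es^{-1}[w(t,x_1,x_2,\es/2) - w(t,x_1,x_2,-\es/2)]$ is independent of $x_3$, so it can be pulled out of the $x_3$-integration. Using the identity $u\,\partial_{x_3} p = \partial_{x_3} \Pi(u)$ with $\Pi(u) = \frac{\gamma}{\gamma+1} u^{\gamma+1}$, the remaining one-dimensional integral evaluates to $\Pi(u_{2,\es})|_{x_3 = \es/2} - \Pi(u_{2,\es})|_{x_3 = -\es/2}$; the continuity conditions $u_{2,\es} = u_{1,\es}$ on $\Gamma_{1,2,\es}$ and $u_{2,\es} = u_{3,\es}$ on $\Gamma_{2,3,\es}$ rewrite this as a difference of $\Pi$-traces coming from $\Omega_{1,\es}$ and $\Omega_{3,\es}$. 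Combined with $\mu_{2,\es}/\es \to \tilde\mu_{1,3}$ and smoothness of $w$, this formally recovers the jump term.

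The hardest step is making the trace passage rigorous, because the interfaces $\Gamma_{i,i+1,\es}$ depend on $\es$ and we only dispose of strong $L^p$-convergence (not $H^1$-convergence) of $\curlyP_\es(u_\es)$. I would circumvent this by representing $\Pi(u_{1,\es})|_{\Gamma_{1,2,\es}}$ as the $h \to 0$ limit of the slab average $h^{-1} \int_{\es/2}^{\es/2+h} \Pi(u_{1,\es})(t,x_1,x_2,x_3) \, dx_3$, sending $\es \to 0$ first (using strong $L^p$-convergence of $\curlyP_\es(u_\es)^{\gamma+1}$ inside $\tilde\Omega_1$) and then $h \to 0$ (using continuity of the trace operator on $\tilde p \in L^2(0,T;H^1(\tilde\Omega_1))$ to identify the slab average with the trace $\Pi(\tilde u)_{|x_3 = 0^+}$). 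An analogous argument applies on the $\Omega_3$-side, and the difference quotient $\es^{-1}[w(t,x_1,x_2,\es/2) - w(t,x_1,x_2,-\es/2)]$ converges to the test-function jump $w_{|x_3=0^+} - w_{|x_3=0^-}$ uniformly by smoothness of $w$ on each side of $\tilde\Gamma_{1,3}$. Collecting all contributions and exploiting that $E^\star$ is generated by finite linear combinations of extensions $\curlyP_\es(\zeta)$ with $\zeta \in \mathcal{D}(\Omega)$ completes the identification of the limit equation.
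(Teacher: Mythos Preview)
Your overall strategy coincides with the paper's: insert $L_\es(w)$ into \eqref{eq:variational-pb-homo}, handle the $i=1,3$ bulk terms via the extension operator and the weak/strong convergences of Lemma~\ref{lem: convergence P}, show the tangential part of the membrane term vanishes by Cauchy--Schwarz, and identify the normal part as a product of trace jumps multiplied by $\mu_{2,\es}/\es$.

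One slip: in your final paragraph you write that ``the difference quotient $\es^{-1}[w|_{\es/2}-w|_{-\es/2}]$ converges to the test-function jump''. This is false whenever the jump is nonzero; the factor $\es^{-1}$ has already been absorbed into $\mu_{2,\es}/\es \to \tilde\mu_{1,3}$ (as you correctly note a few lines earlier), so what remains is simply the \emph{difference} $w|_{\es/2}-w|_{-\es/2}$, and it is this that converges uniformly to the jump. With this correction your treatment of the normal component matches the paper's exactly.

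The only substantive divergence from the paper is the trace passage. The paper isolates it as Lemma~\ref{lemma: intermediate}: it observes that $\curlyP_\es(u_\es^{\gamma+1})$ is bounded in $L^2(0,T;H^1(\Omega\setminus\tilde\Gamma_{1,3}))$ with $\partial_t\curlyP_\es(u_\es^{\gamma+1})$ bounded in $L^\infty(0,T;L^1)$, applies Aubin--Lions to get strong convergence in $L^2(0,T;H^\beta)$ for some $\tfrac12<\beta<1$, and then invokes continuity of the trace $H^\beta\to L^2(\tilde\Gamma_{1,3})$; a short gradient estimate absorbs the shift from $x_3=\pm\es/2$ to $x_3=0^\pm$. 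Your slab-average double-limit is a legitimate alternative that avoids the fractional-Sobolev interpolation, but you should make two ingredients explicit: (a) the $L^2((0,T)\times\tilde\Gamma_{1,3})$-error between the trace at $x_3=\es/2$ and the slab average over $[\es/2,\es/2+h]$ is $O(h^{1/2})$ \emph{uniformly in $\es$}, which requires the $L^2$-bound on $\nabla\Pi(u_\es)$ outside $\Omega_{2,\es}$ from Lemma~\ref{lemma: a priori}(iv); and (b) the slab average converges for each fixed $h$ by the strong $L^p$-convergence of $\curlyP_\es(u_\es^{\gamma+1})$ already available from Lemma~\ref{lem: convergence P}. Both routes reach the same conclusion; the paper's is more direct, yours arguably more elementary.
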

%%%%%%%%%%%%%%%%%%%%%%%%%%%%%5
\begin{proof}
We may pass to the limit in Equation~\eqref{eq: test L}, computing each term individually. 

\noindent{\textit{Step 1. Time derivative integral.}} We split the first integral into two parts
\begin{align*}
    -\int_0^T\int_{\Omega} u_\es \partial_t L_\es(w)  =   \underbrace{-\int_0^T\int_{\Omega_{1,\es}\cup \Omega_{3,\es}} u_\es \partial_t L_\es(w)}_{\curlyI_1} - \underbrace{ \int_0^T\int_{\Omega_{2,\es}} u_\es \partial_t L_\es(w)}_{\curlyI_2}.
\end{align*}
Since outside of $\Omega_{2,\es}$ the extension operator coincides with the identity, and $L_\es(w)=w$, we have
\begin{align*}
 \curlyI_1 = -\int_0^T\int_{\Omega_{1,\es}\cup \Omega_{3,\es}} \curlyP_\es(u_\es) \partial_t w = -\int_0^T\int_{\Omega} \curlyP_\es(u_\es) \partial_t w +\int_0^T\int_{\Omega_{2,\es}} \curlyP_\es(u_\es) \partial_t w.
\end{align*}
Thanks to Remark~\ref{rmk: a priori P}, we know that the last integral converges to zero, since both $\curlyP_\es(u_\es)$ and $\partial_t w$ are bounded in $L^2$ and the measure of $\Omega_{2,\es}$ tends to zero as $\es\rightarrow 0$.
Then, by Lemma~\ref{lem: convergence P}, we have
\begin{equation*}
     -\int_0^T\int_{\Omega} \curlyP_\es(u_\es) \partial_t w \longrightarrow  -\int_0^T\int_{\Omega} \tilde{u} \ \partial_t w, \quad \mbox{ as } \es\rightarrow 0, 
\end{equation*}
where we used the weak convergence of $\curlyP_\es(u_{\es})$ to $\tilde{u}$ in $L^2(0,T; L^2( \Omega\setminus\tilde\Gamma_{1,3}))$.
The term $\curlyI_2$ vanishes in the limit, since both $u_\es$ and $\partial_t L_\es(w)$ are bounded in $L^2$ uniformly with respect to $\es$. Hence, we finally have
 \begin{equation}\label{eq: conv of dt term}
      -\int_0^T\int_{\Omega} u_\es \partial_t L_\es(w)  \longrightarrow   -\int_0^T\int_{\Omega} \tilde{u} \ \partial_t w, \quad \mbox{ as } \es\rightarrow 0.
 \end{equation}
\noindent{\textit{Step 2. Reaction integral.}} We use the same argument for the reaction term, namely 
\begin{equation*}
    \int_0^T \int_{\Omega} u_\es G(p_\es) L_\es(w) =   \underbrace{\int_0^T \int_{\Omega_{1,\es}\cup\Omega_{3,\es}} u_\es G(p_\es) L_\es(w)}_{\curlyK_1} +  \underbrace{\int_0^T \int_{\Omega_{2,\es}} u_\es G(p_\es) L_\es(w)}_{\curlyK_2}.
\end{equation*}
Using again the convergence result on the extension operator, \cf Lemma~\ref{lem: convergence P}, we obtain
\begin{equation*}
    \curlyK_1 = \int_0^T \int_{\Omega_{1,\es}\cup\Omega_{3,\es}} \curlyP_\es(u_\es) G(\curlyP_\es(p_\es)) w \longrightarrow \int_0^T \int_{\Omega} \tilde{u}\, G(\tilde{p}) w, \quad \mbox{ as } \es\rightarrow 0,
\end{equation*}
since both $\curlyP_\es(u_\es)$ and $G(\curlyP_\es(p_\es))$ converge strongly in $L^2(0,T; L^2(\Omega\setminus\tilde\Gamma_{1,3}))$.
Arguing as before, it is immediate to see that $\curlyK_2$ vanishes in the limit. Hence
\begin{equation}
    \label{eq: conv of G term}
      \int_0^T \int_{\Omega} u_\es G(p_\es) L_\es(w) \longrightarrow   \int_0^T \int_{\Omega} \tilde{u} G(\tilde{p}) w, \quad \mbox{ as } \es\rightarrow 0.
\end{equation}
\noindent{\textit{Step 3. Initial data integral.}}
From \eqref{conv u0}, it is easy to see that
\begin{equation}\label{eq: conv term initial}
    \int_{\Omega} u_\es^0 L_\es(w^0) \longrightarrow     \int_{\Omega} \tilde{u}^0 w^0, \quad \mbox{ as } \es\rightarrow 0.
\end{equation}

\noindent{\textit{Step 4. Divergence integral.}}
Now it remains to treat the divergence term in Equation~\eqref{eq: test L}, from which we recover the effective interface conditions at the limit.

Since the extension operator $\curlyP_\es$ is in fact the identity operator on $\Omega\setminus\Omega_{2,\es}$, we can write
\begin{equation}\label{eq: H1 H2}
\begin{split}
  \sum_{i=1}^3 \mu_{i,\es}  &\int_0^T \int_{\Omega_{i,\es}} u_{i,\es} \nabla p_{i,\es} \cdot \nabla L_{\es}(w) \\
  =&\underbrace{\sum_{i=1,3} \mu_{i,\es} \int_0^T \int_{\Omega_{i,\es}} \curlyP_\es (u_{i,\es}) \nabla \curlyP_\es(p_{i,\es}) \cdot \nabla w}_{\curlyH_1} + \underbrace{\mu_{2,\es}\int_0^T \int_{\Omega_{2,\es}}  u_{2,\es} \nabla p_{2,\es} \cdot \nabla L_{\es}(w)}_{\curlyH_2}. 
  \end{split}
\end{equation}
We treat the two terms separately. 
Since we want to use the weak convergence of $\nabla\curlyP_\es(p_\es)$ in $L^2(0,T;L^2(\Omega\setminus \tilde\Gamma_{1,3}))$ (together with the strong convergence of $\curlyP_\es(u_\es)$ in $L^2(0,T;L^2(\Omega\setminus \tilde\Gamma_{1,3}))$) we need to write the term $\curlyH_1$ as an integral over $\Omega$. To this end, let $\overline{\mu}_\es:=\overline{\mu}_\es(x)$ be a function defined as follows
\begin{equation*}
    \overline{\mu}_\es (x) := 
    \begin{dcases}
    \mu_{1,\es} \qquad &\text{ for } x \in \Omega_{1,\es},\\[0.2em]
    0  \qquad &\text{ for } x \in \Omega_{2,\es},\\[0.2em]
    \mu_{3,\es} \qquad &\text{ for } x \in \Omega_{3,\es}.
    \end{dcases}
\end{equation*}  
Then, we can write
\begin{equation*}
     \curlyH_1 = \int_0^T \int_{\Omega} \overline{\mu}_\es \curlyP_\es (u_\es) \nabla \curlyP_\es(p_\es) \cdot \nabla w .
\end{equation*}
Let us notice that as $\es$ goes to $0$, $\overline{\mu}_\es$ converges to $\tilde\mu_1$ in $\tilde\Omega_1$ and $\tilde\mu_3$ in $\tilde\Omega_3$. 
Therefore, by Lemma~\ref{lem: convergence P}, we infer
\begin{equation}\label{eq: conv divergence term}
    \curlyH_1 \longrightarrow \tilde\mu_1 \CORR{\int_0^T\int_{\tilde\Omega_1}  \ \tilde{u} \ \nabla \tilde{p} \cdot \nabla w +  \tilde\mu_3  \int_0^T\int_{\tilde\Omega_3} \ \tilde{u} \ \nabla \tilde{p} \cdot \nabla w},\quad \text{as} \quad \es\to 0. 
\end{equation}
Now we treat the term $\curlyH_2$, which can be written as 
\begin{align*}
   \curlyH_2 =& \mu_{2,\es}\int_0^T \int_{\Omega_{2,\es}}  u_{2,\es} \nabla p_{2,\es} \cdot \nabla L_{\es}(w)\\
   =& \mu_{2,\es}\int_0^T\int_{\Omega_{2,\es}} \prt*{u_{2,\es} \frac{\partial p_{2,\es}}{\partial x_1} \frac{\partial L_\es(w)}{\partial x_1} + u_{2,\es} \frac{\partial p_{2,\es}}{\partial x_2} \frac{\partial L_\es(w)}{\partial x_2}} +  \mu_{2,\es}\int_0^T\int_{\Omega_{2,\es}} u_{2,\es} \frac{\partial p_{2,\es}}{\partial x_3} \frac{\partial L_\es(w)}{\partial x_3}.
\end{align*}
By the Cauchy-Schwarz inequality, the a priori estimate~\eqref{eq:bound-nablap}, and Equation~\eqref{eq: bound dxL}, we have
\begin{align*}
 \mu_{2,\es}\int_0^T\int_{\Omega_{2,\es}}& u_{2,\es} \frac{\partial p_{2,\es}}{\partial x_1} \frac{\partial L_\es(w)}{\partial x_1} + u_{2,\es} \frac{\partial p_{2,\es}}{\partial x_2} \frac{\partial L_\es(w)}{\partial x_2} \\[0.3em]
 &\leq \mu_{2,\es}^{1/2} \|u_{2,\es}\|_{L^\infty((0,T)\times\Omega_{2,\es})} \prt*{\left\|\mu_{2,\es}^{1/2} \frac{\partial p_{2,\es}}{\partial x_1}\right\|_{L^2((0,T)\times\Omega_{2,\es})}  \left\|\frac{\partial L_\es(w)}{\partial x_1} \right\|_{L^2((0,T)\times\Omega_{2,\es})} }\\[0.3em]
 &\quad +\mu_{2,\es}^{1/2} \|u_{2,\es}\|_{L^\infty((0,T)\times\Omega_{2,\es})} \prt*{\left\|\mu_{2,\es}^{1/2} \frac{\partial p_{2,\es}}{\partial x_2}\right\|_{L^2((0,T)\times\Omega_{2,\es})}  \left\|\frac{\partial L_\es(w)}{\partial x_2} \right\|_{L^2((0,T)\times\Omega_{2,\es})} }\\[0.3em]
 &\leq C\ \mu_{2,\es}^{1/2} \ \es^{1/2} \rightarrow 0.
\end{align*}
On the other hand, by Fubini's theorem, the following equality holds
\begin{equation*}
\begin{split}
     \mu_{2,\es}&\int_0^T\int_{\Omega_{2,\es}} u_{2,\es} \frac{\partial p_{2,\es}}{\partial x_3} \frac{\partial L_\es(w)}{\partial x_3} \\[0.3em]
     &=  \mu_{2,\es}\frac{\gamma}{\gamma+1}\int_0^T\int_{\Omega_{2,\es}}  \frac{\partial u_{2,\es}^{\gamma+1}}{\partial x_3} \frac{\partial L_\es(w)}{\partial x_3} \\[0.3em]
     &= \mu_{2,\es}\frac{\gamma}{\gamma+1}\int_0^T \int_{-\es/2}^{\es/2}\int_{\tilde\Gamma_{1,3}}  \frac{\partial u_{2,\es}^{\gamma+1}}{\partial x_3} \frac{\partial L_\es(w)}{\partial x_3} \dx{\sigma} \dx{x_3}\\[0.3em]
   &= \mu_{2,\es}\frac{\gamma}{\gamma+1}\int_0^T\int_{-\es/2}^{\es/2}\int_{\tilde\Gamma_{1,3}}  \frac{\partial u_{2,\es}^{\gamma+1}}{\partial x_3} \frac{w_{|x_3=\frac \es 2}- w_{|x_3=-\frac \es 2}}{\es} \dx{\sigma} \dx{x_3}\\[0.3em]
    &=\frac{\mu_{2,\es}}{\es}\frac{\gamma}{\gamma+1}\int_0^T \int_{\tilde\Gamma_{1,3}} \prt*{w_{|x_3=\frac \es 2}- w_{|x_3=-\frac \es 2}} \int_{-\es/2}^{\es/2}  \frac{\partial u_{2,\es}^{\gamma+1}}{\partial x_3} \dx{x_3} \dx{\sigma} \\[0.3em]
     &= \frac{\mu_{2,\es}}{\es}\frac{\gamma}{\gamma+1}\int_0^T\int_{\tilde\Gamma_{1,3}} \prt*{(u_{2,\es}^{\gamma+1})_{|x_3=\frac \es 2}- (u_{2,\es}^{\gamma+1})_{|x_3=-\frac \es 2}}\cdot \prt*{w_{|x_3=\frac \es 2}- w_{|x_3=-\frac \es 2}}.
     \end{split}
     \end{equation*}
Therefore,
\begin{equation}\label{eq: fund eq}
    \lim_{\es\to 0} \curlyH_2 = \lim_{\es\to 0} \frac{\mu_{2,\es}}{\es}\frac{\gamma}{\gamma+1}\int_0^T\int_{\tilde\Gamma_{1,3}} \prt*{(u_{2,\es}^{\gamma+1})_{|x_3=\frac \es 2}- (u_{2,\es}^{\gamma+1})_{|x_3=-\frac \es 2}}\cdot \prt*{w_{|x_3=\frac \es 2}- w_{|x_3=-\frac \es 2}}.
\end{equation}
In order to conclude the proof, we state the following lemma, which is proven \hyperlink{proof lemma: intermediate}{below}.

\begin{lemma}\label{lemma: intermediate}
The following limit holds uniformly in $\tilde\Gamma_{1,3}$
\begin{equation}\label{membrtest}
    w_{|x_3=\frac \es 2}- w_{|x_3=-\frac \es 2} \longrightarrow w_{|x_3=0^+}- w_{|x_3=0^-}, \quad\mbox{ as } \es \rightarrow 0.
\end{equation} 
Moreover, 
\begin{equation}\label{eq: conv u gamma+1}
  \frac{\gamma}{\gamma+1} \prt*{ (u_{2,\es}^{\gamma+1})_{|x_3=\frac \es 2}- (u_{2,\es}^{\gamma+1})_{|x_3=-\frac \es 2}} \longrightarrow \frac{\gamma}{\gamma+1}  \prt*{(\tilde{u}^{\gamma+1})_{|x_3=0^+}- (\tilde{u}^{\gamma+1})_{|x_3=0^-}},
\end{equation}
strongly in $L^2(0,T; L^2(\tilde\Gamma_{1,3}))$, as $\es \rightarrow 0$.
\end{lemma}
We may finally find the limit of the term $\curlyH_2$, using Assumption~\eqref{eq:cond-mu}, and applying Lemma~\ref{lemma: intermediate} to Equation~\eqref{eq: fund eq}
\begin{align*}
  &\frac{\mu_{2,\es}}{\es}\frac{\gamma}{\gamma+1}\int_0^T\int_{\tilde\Gamma_{1,3}} \prt*{(u_{2,\es}^{\gamma+1})_{|x_3=\frac \es 2}- (u_{2,\es}^{\gamma+1})_{|x_3=-\frac \es 2}}\cdot \prt*{w_{|x_3=\frac \es 2}- w_{|x_3=-\frac \es 2}}\\
  \longrightarrow \; & \tilde\mu_{1,3}\frac{\gamma}{\gamma+1}\int_0^T\int_{\tilde\Gamma_{1,3}} \prt*{ \left(\tilde{u}^{\gamma+1}\right)_{|x_3=0^+}- \left(\tilde{u}^{\gamma+1}\right)_{|x_3=0-}}\cdot \prt*{w_{|x_3=0^+}- w_{|x_3=0^-}},
\end{align*}
as $\es\to 0$.
Combining the above convergence to Equation~\eqref{eq: H1 H2} and Equation~\eqref{eq: conv divergence term}, we find the limit of the divergence term as $\es$ goes to $0$,
\begin{align*}
     \sum_{i=1}^3 \mu_{i,\es}  &\int_0^T \int_{\Omega_{i,\es}} u_{i,\es} \nabla p_{i,\es} \cdot \nabla L_{\es}(w) \\
     \longrightarrow \;& \tilde\mu_1  \int_0^T\int_{\tilde\Omega_1} \ \tilde{u} \nabla \tilde{p} \cdot \nabla w + \tilde\mu_3 \int_0^T\int_{\tilde\Omega_3}  \tilde{u} \nabla \tilde{p} \cdot \nabla w\\
     &+ \tilde\mu_{1,3}\frac{\gamma}{\gamma+1}\int_0^T\int_{\tilde\Gamma_{1,3}} \prt*{(\tilde{u}^{\gamma+1})_{|x_3=0^+}- (\tilde{u}^{\gamma+1})_{|x_3=0^-}}\cdot \prt*{w_{|x_3=0^+}- w_{|x_3=0^-}},
\end{align*}
which, together with Equations~\eqref{eq: test L}, \eqref{eq: conv of dt term}, \eqref{eq: conv of G term}, and \eqref{eq: conv term initial}, concludes the proof.

\end{proof}

We now turn to the \hypertarget{proof lemma: intermediate}{proof of Lemma~\ref{lemma: intermediate}}
\begin{proof}[Proof of Lemma~\ref{lemma: intermediate}]
\begin{sloppypar}
    Since by definition $w(t,x)= \varphi(t) v(x) $, with $\varphi\in C^1([0,T))$ and $v\in E^*$, the uniform convergence in Equation~\eqref{membrtest} comes from the piece-wise differentiability of $w$. 
    
    A little bit trickier is the second convergence, \ie Equation \eqref{eq: conv u gamma+1}. We recall that on ${ \{x_3= \pm \es /2 \}}$, ${u^{\gamma+1}_{2,\es}}$ coincides with ${\curlyP_\es(u_{\es}^{\gamma+1})}$, since 
    across the interfaces $u_\es$ is continuous and ${\curlyP_\es(u_{i,\es})=u_{i,\es}}$, for $i=1,3$. 
    \end{sloppypar}
    \color{black}{Let us recall that from Remark~\ref{rmk: a priori P}, we have
  $$  \left\|\curlyP_\es(u_\es^{\gamma+1})\right\|_{L^2(0,T; H^1(\Omega\setminus\tilde\Gamma_{1,3}))} \leq C, \ \text{ and } \
    \left\|\partial_t\prt*{ \curlyP_\es(u_\es^{\gamma+1})}\right\|_{L^\infty(0,T;L^1(\Omega\setminus\tilde\Gamma_{1,3}))}\leq C. $$  }
    
    %%%%%%%%%%%%%%%%%%%%%%
    %%%%%%%%%%%%%%%%%%%%%%
    %%%%%%%%%%%%%%%%%%%%%%
    \commentout{Moreover, from \textit{(i)} and \textit{(iv)} in Remark~\ref{rmk: a priori P}, we infer that
    \begin{equation*}
      \frac{\gamma}{\gamma+1}  \nabla \prt*{ \curlyP_\es(u_\es^{\gamma+1})} = \curlyP_\es(u_\es) \nabla\curlyP_\es(p_{\es})\in L^2(0,T;L^2(\Omega\setminus\tilde\Gamma_{1,3})),
    \end{equation*}
    hence 
    \begin{equation*}
         \CORR{\left\|\curlyP_\es(u_\es^{\gamma+1})\right\|_{L^2(0,T; H^1(\Omega\setminus\tilde\Gamma_{1,3}))} \leq C.}
    \end{equation*}
Again from Remark~\ref{rmk: a priori P}, we know that
\begin{equation*}
    \left\|\partial_t\prt*{ \curlyP_\es(u_\es^{\gamma+1})}\right\|_{L^\infty(0,T;L^1(\Omega\setminus\tilde\Gamma_{1,3}))}\leq C.
\end{equation*}}

%%%%%%%%%%%%%%%%%%%%%%%%%%%
%%%%%%%%%%%%%%%%%%%%%%%%%%%
%%%%%%%%%%%%%%%%%%%%%%%%%%%
\noindent
Since we have the following embeddings
\begin{equation*}
    H^1(\Omega\setminus\tilde\Gamma_{1,3}) \subset\subset H^\beta(\Omega\setminus\tilde\Gamma_{1,3})\subset L^1(\Omega\setminus\tilde\Gamma_{1,3}),
\end{equation*}
for every $\frac 12 < \beta < 1$, upon applying Aubin-Lions lemma, \cite{aubin, lions}, we obtain
\begin{equation*}
    \curlyP_\es(u_\es^{\gamma +1}) \longrightarrow \tilde{u}^{\gamma +1}, \quad \text{ as } \es \to 0,
\end{equation*}
strongly in $L^2(0,T; H^\beta (\Omega\setminus\tilde\Gamma_{1,3}))$.

\CORR{
Thanks to the continuity of the trace operators $\curlyT_\alpha:  H^\beta(\tilde \Omega_\alpha \setminus\tilde\Gamma_{1,3}) \rightarrow{} L^2(\partial \tilde \Omega_\alpha)$, 
%\tr{Isn't it $L^2(\partial \tilde \Omega_\alpha)$, since $\tilde\Gamma_{1,3}\subset\partial \tilde \Omega_\alpha $ ? Look also in trace definition} 
for $\frac 12 < \beta < 1$ and $\alpha =1,3$, we finally recover that
\begin{equation}\label{eq: trace convergence}
     \left\|\curlyP_\es(u_\es^{\gamma+1})_{|x_3= 0^\pm}- \left(\tilde{u}^{\gamma+1}\right)_{|x_3=0^\pm}\right\|_{L^2(0,T; L^2(\tilde\Gamma_{1,3}))} \le C \left\| \curlyP_\es(u_\es^{\gamma+1})- \tilde{u}^{\gamma+1}\right\|_{L^2(0,T; H^\beta(\Omega\setminus\tilde\Gamma_{1,3}))}\rightarrow 0,
\end{equation}
as $\es \rightarrow 0$.
We recall that the trace vanishes on the external boundary, $\partial \Omega$, therefore  we only consider the $L^2(0,T; L^2(\tilde\Gamma_{1,3}))$-norm.
}

Recalling that $L$ is the length of $\Omega$, trivially, we find the following estimate
\begin{align*}
       \big\|\curlyP_\es(u_{\es}^{\gamma+1})_{|x_3=\pm \es/2}  - \curlyP_{\es}(u_{\es}^{\gamma+1})_{|x_3=0^\pm}& \big\|^2_{L^2(0,T; L^2(\tilde\Gamma_{1,3}))}\\
       &= \int_0^T\int_{\tilde\Gamma_{1,3}} \prt*{\int_{0}^{\pm\es/2} \frac{\partial \curlyP_{\es}(u_{\es}^{\gamma+1})}{\partial x_3}  }^2   \\[0.3em]
       & = \int_0^T \int_{\tilde \Gamma_{1,3}} \left( \int_L   \frac{\partial \curlyP_{\es}(u_{\es}^{\gamma+1})}{\partial x_3} \mathds{1}_{[0,\pm\es/2]}(x_3) \right)^2 \\[0.3em]
       &\le \int_0^T \int_{\tilde \Gamma_{1,3}} \left( \int_L   \left( \frac{\partial \curlyP_{\es}(u_{\es}^{\gamma+1})}{\partial x_3}\right)^2 \int_L \left(\mathds{1}_{[0,\pm\es/2]}(x_3) \right)^2 \right)\\[0.3em]
       &\leq  \frac\es 2  \|\nabla \curlyP_\es(u_{\es}^{\gamma+1})\|_{L^2(0,T;L^2(\Omega\setminus\tilde\Gamma_{1,3}))}\\[0.3em]
       &\leq \es \ C,
   \end{align*} 
   and combing it with Equation~\eqref{eq: trace convergence}, we finally obtain Equation~\eqref{eq: conv u gamma+1}.
  
\end{proof}

\begin{remark}
Although not relevant from a biological point of view, let us point out that, in the case of dimension greater than 3, the analysis goes through without major changes. It is clear that the \textit{a priori} estimates are not affected by the shape or the dimension of the domain (although some uniform constants $C$ may depend on the dimension, this does not change the result in Lemma~\ref{lemma: a priori}). The following methods, and in particular the definition of the extension operator and the functional space of test functions, clearly depends on the dimension, but the strategy is analogous for a $d$-dimensional cylinder with axis $\{x_1=\dots=x_{d-1}=0\}$.
\end{remark}

\CORR{\begin{remark}
We did not consider the case of non-constant mobilities, \ie $\mu_{i,\es}:=\mu_{i,\es}(x)$, but continuity and boundedness are the minimal hypothesis to succeed in the proof.
\end{remark}}

\section{Conclusions and perspectives}\label{sec: conclusion}
We proved the convergence of a continuous model of cell invasion through a membrane when its thickness is converging to zero, hence giving a rigorous derivation of the effective transmission conditions already conjectured in Chaplain \textit{et al.}, \cite{giverso}. Our strategy relies on the methods developed in \cite{sanchez-palencia}, although we had to handle the difficulties coming from the nonlinearity and degeneracy of the system. \commentout{We did not consider the case of non-constant mobilities, \ie $\mu_{i,\es}:=\mu_{i,\es}(t,x)$, which could bring additional challenges to the derivation of the \textit{a priori estimates} and the compactness results. In particular, a}\CORR{A} very interesting direction both from the biological and mathematical point of view, could be coupling the system to an equation describing the evolution of the MMP concentration. In fact, as observed in \cite{giverso}, the permeability coefficient can depend on the local concentration of MMPs, since it indicates the level of "aggressiveness" at which the tumour is able to destroy the membrane and invade the tissue.

In a recent work~\cite{giverso2}, a formal derivation of the multi-species effective problem has been proposed. However, its rigorous proof remains an interesting and challenging open question. Indeed, introducing multiple species of cells, hence dealing with a cross-(nonlinear)-diffusion system, adds several challenges to the problem. As it is well-known, proving the existence of solutions to cross-diffusion systems with different mobilities is one of the most challenging and still open questions in the field. Nevertheless, even when dealing with the same constant mobility coefficients, the nature of the multi-species system (at least for dimension greater than one) usually requires strong compactness on the pressure gradient. We refer the reader to \cite{GPS, price2020global} for existence results of the two-species model without membrane conditions.

Another direction of further investigation of the effective transmission problem \eqref{effectivepb} could be studying the so-called \textit{incompressible limit}, namely the limit of the system as $\gamma \rightarrow \infty$. The study of this limit has a long history of applications to tumour growth models, and has attracted a lot of interest since it links density-based models to a geometrical (or free boundary) representation, \cf \cite{PQV, KP17}.

Moreover, including the heterogeneity of the membrane in the model could not only be useful in order to improve the biological relevance of the model, but could bring interesting mathematical challenges, forcing to develop new methods or adapt already existent ones, \cite{neuss-radu}, from the parabolic to the degenerate case.

%%%%%%%%%%%%%%%%%%%%%%%%%%%%%%%%%%%%
\section*{Acknowledgements}
G.C. and A.P. have received funding from the European Research Council (ERC) under the European Union's Horizon 2020 research and innovation programme (grant agreement No 740623). The work of G.C. was also partially supported by GNAMPA-INdAM.
\\
N.D. has received funding from the European Union's Horizon 2020 research and innovation program under the Marie Skłodowska-Curie (grant agreement No 754362). \\
The authors are grateful to Beno\^it Perthame for fruitful discussions.

\CORRdeux{
\appendix
\section{Existence of weak solution of the initial problem}\label{appendix:existence}
We prove in this appendix the existence of solution for System~\eqref{epspb}. 
Similarly to diffraction problems modelled by linear parabolic equations (see Section 3.13 in~\cite{ladyvzenskaja1988linear}), this result follows from the existence of solution for the Porous Medium Equation with discontinuous coefficients. Indeed, using a test function $w \in C^\infty(\Omega_T)$, solutions of the following weak formulation 
\[
    \int_\Omega \partial_t u  w + \mu(x) u \nabla u^\gamma \cdot \nabla w \,\dd x= \int_\Omega u G(p) w \, \dd x,  
\]
are actually solutions of the strong form~\eqref{epspb}. This is obtained from the fact that the interfaces $\Gamma_{i,i+1}$ (for $i=1,2$) are continuous and from the interface conditions.   

Even though the proof of the existence of weak solutions follow the lines of Section 5.4 in~\cite{vasquez}, we could not find a proof of this result in the case of discontinuous mobility coefficients in the literature, hence, for the sake of clarity, we give in this appendix the idea of the proof.  

\begin{thm}[Existence of weak solutions for the initial problem]
    Assuming that $\mu_i >0$ for $i=1,2,3$, System~\eqref{epspb} admits a weak solution $u \in L^1(\Omega_T)$ and $p \in L^1(0,T;H^1_0(\Omega))$.
\end{thm}
\begin{proof}
    \textit{Step 1: Regularized problem.} We first regularize the model to convert it into a non-degenerate parabolic model. We use a positive parameter $n$ and define a positive initial condition 
    \begin{equation}
    u_{0n} = u_{0} + \frac{1}{n}.
    \end{equation}
    Our regularized problem reads 
    \begin{equation}\label{epspb-reg}
	\left\{
	\begin{array}{rlll}
		&\partial_t u_{i,n} - \mu_{i} \nabla \cdot ( u_{i,n} \nabla p_{i,n}) = u_{i,n} G(p_{i,n})  & \text{ in } (0,T)\times\Omega_{i}, & i=1,2,3,\\[1em] 
		&\mu_{i} u_{i,n} \nabla p_{i,n}\cdot \boldsymbol{n}_{i,i+1} =  \mu_{i+1} u_{i+1,n} \nabla p_{i+1,n}\cdot \boldsymbol{n}_{i,i+1}  &\text{ on } (0,T)\times\Gamma_{i,i+1,}, & i = 1,2,\\[1em]
		&u_{i,n} = u_{i+1,n}  &\text{ on } (0,T) \times\Gamma_{i,i+1,}, & i = 1,2,\\[1em]
		&u_{i,n}=\frac{1}{n} &\text{ on } (0,T) \times\partial\Omega.
	\end{array}
	\right.
\end{equation}

From results on diffraction problems from~\cite{ladyvzenskaja1988linear} we know that in weak form our regularized problem is only a quasi-linear parabolic PDE. Thus, from standard results on these equations, we can have the existence of a classical solution $u_n \in  C^{1,2}(\Omega_T)$ of Problem~\eqref{epspb-reg}. Then, at this point the rest of the proof is similar to Section 5.4 in~\cite{vasquez}. 
We obtain at the end the existence of weak solutions $u \in L^1(\Omega_T)$ and $p \in L^1(0,T;H^1_0(\Omega))$ of Problem~\eqref{epspb}.

\end{proof}
}
\bibliographystyle{abbrvnat}%abbrvnat
\bibliography{references}

\end{document}